\documentclass[11pt]{amsart}  

\usepackage[T1]{fontenc}

\usepackage{tikz}

\usepackage{amsmath,amssymb,mathtools}
\usepackage{microtype}
\usepackage{enumitem}
\usepackage{hyperref}

\usepackage[margin=1.1in]{geometry}

\usepackage{todonotes}

\newtheorem{theorem}{Theorem}[section]
\newtheorem{proposition}[theorem]{Proposition}
\newtheorem{lemma}[theorem]{Lemma}
\newtheorem{claim}[theorem]{Claim}
\newtheorem{fact}[theorem]{Fact}
\theoremstyle{definition}
\newtheorem*{definition}{Definition}
\newtheorem*{remark}{Remark}

\newcommand{\R}{\mathbb R}

\newcommand{\cP}{\mathcal P}
\newcommand{\cQ}{\mathcal Q}
\newcommand{\cR}{\mathcal R}
\newcommand{\bad}{\mathcal B}
\newcommand{\cV}{\mathcal V}
\newcommand{\1}{\mathbf 1}
\newcommand{\ang}{\angle}
\newcommand{\dist}{\operatorname{dist}}
\newcommand{\conv}{\operatorname{conv}}
\newcommand{\per}{\operatorname{per}}
\newcommand{\diam}{\operatorname{diam}}
\newcommand{\HH}{\mathcal H^1}

\newcommand{\norm}[1]{\lVert #1\rVert}

\title{Area stability of plank covers}

\author[Bakaev]{Egor Bakaev}
\address{Department of Computer Science, University of Copenhagen}

\author[Yehudayoff]{Amir Yehudayoff}
\address{Department of Computer Science, University of Copenhagen,
and Department of Mathematics, Technion-IIT}

\dedicatory{Dedicated to Avi Wigderson in celebration of his 70th birthday.}

\begin{document}
\begin{abstract}
We prove a conjecture of Andr\'as Bezdek on the stability of plank covers of the planar disk $D$. Namely, we show that if a sufficiently small concentric disk $rD$ is removed from~$D$, then every finite family of planks covering the resulting annulus can be re-arranged to cover the whole disk. For the proof, we show a strong stability result for a related covering problem. If the hole has area $a \geq 0$ then the total overlap is $\geq c a/r$ where $c>0$ is a constant. This overlap estimate is not specific to the disk and is applicable to all planar symmetric convex bodies. We develop two new ingredients: a pruning procedure and a flattening mechanism.
\end{abstract}
\maketitle
\section{Introduction}

A \emph{plank} in $\mathbb R^d$ is the closed region $P$ between two parallel hyperplanes, and its width $w(P)$ is the distance between them.  
The classical plank problem, traditionally attributed to Tarski \cite{tarski1932uwagi}, asks whether the total width of a collection of planks covering a convex body can be smaller than the width of a single plank covering the body.
In his beautiful work~\cite{bang1950covering,bang1951solution},
Bang proved that a single plank achieves the minimum possible total width. This problem has connections to many other areas, such as Diophantine approximation and the geometry of numbers \cite{alexander1968problem,davenport1962note,kupavskii2017tarski}, Banach space and polynomial inequalities \cite{munoz2010real,nazarov1997bang}, and hyperplane-cover problems in combinatorics and theoretical computer science \cite{yehuda2024lower}; see \cite{bezdek2013tarski,brass2005research,verreault2026plank} for surveys.

Andr\'as Bezdek asked about the stability of Bang's theorem~\cite{bezdek2003covering}. How much can the covering cost decrease if some small prescribed region is allowed to remain uncovered? Specifically, he  asked for which planar convex bodies $K$, there exists $\varepsilon>0$ such that if $\varepsilon K\subset\operatorname{int}K$ then covering the annulus $K\setminus\varepsilon K$ requires the same cost as covering $K$. He proved the square-annulus theorem that states stability when $K$ is a square. It is worth noting that this stability is not universal; it fails e.g.\ for triangles.

The case that $K$ is the disk
$D=\{x\in\R^2:\norm{x}\leq 1\}$ and the hole is concentric remained the central open case of   A.~Bezdek's conjecture; see~\cite{ambrus2025covering, bezdek2003covering, bezdek2013tarski, brass2005research, fejestoth2023four, kupavskii2026nondissective, smurov2010covering, verreault2026plank, white2007covering, zhang2008note}.
Subsequent work treated polygonal annuli, higher-dimensional cubes, and other families of convex bodies \cite{ambrus2025covering,white2007covering,zhang2008note,smurov2010covering}; see also \cite{bezdek2013tarski,brass2005research}.  
W.~Kuperberg solved the analogous annulus problem in $3$-dimensional space~\cite{bezdek2003covering}. The reason is that if the planks cover the unit sphere, a width-$w$ plank cuts out a spherical zone of area at most $2\pi w$, and the sphere has area $4\pi$.

Bezdek's problem has been open for more than two decades, and we resolve it. In other words, we show that if a circular annulus $D \setminus (rD)$ with $r>0$ small is covered by planks then the planks can be rearranged to cover the whole disk $D$.

\begin{theorem}\label{thm:annulus}
There is a constant $r^* >0$ such that if a finite family of planar planks $\cP$ covers the annulus $D\setminus (r^*D)$ then
\[
 \sum_{P \in \cP} w(P) \ge2.
\]
\end{theorem}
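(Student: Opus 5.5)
We argue by contradiction: suppose $\cP$ covers $D\setminus(rD)$ with $W:=\sum_{P\in\cP} w(P)<2$. The aim is to show that this forces the hole to be large, i.e.\ $r\ge r^*$. The strategy, as the abstract indicates, is an overlap inequality. We compare the planks to Bang-type "cost" accounting. Each plank $P$ meets $D$ in a region of area at most that of a central plank of width $w(P)$, which is roughly $2w(P)$ when $w(P)$ is small and less than $2w(P)$ in general. Bang-type arguments give more refined weights: one uses a weight function on $D$ under which a plank of width $w$ has weight at most $w$, while $D$ has total weight $2$. For the disk, the natural candidate is the projection of the uniform measure on the sphere, in the spirit of Archimedes; this is exactly the measure behind Kuperberg's three-dimensional argument. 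Since the planks cover $D\setminus rD$ with total weight $W<2$, the uncovered part of $D$ must have weight at least $2-W$, and this is all contained in $rD$. The projected Archimedes density near the center is about $1/(2\pi)$ per unit area, so the hole can only carry weight of order $r^2$. Consequently $W\ge 2-O(r^2)$.

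This alone is not enough, since we need $W\ge 2$ exactly. The second ingredient is a quantitative stability estimate. We show that whenever planks cover $D$ minus a region of area $a$, the total overlap (the excess of $\sum w(P)$ over the weight actually used) is at least $c\,a/r$. The point is to make this overlap beat the deficit. Heuristically, every point of the annulus near the hole is covered, so the planks passing near the hole must enter and exit it. Consider a plank that crosses $rD$: it covers a chord region of the small disk, but its weight is "spent" along its whole length in $D$. The planks whose union surrounds the hole must overlap in a region comparable to the boundary of the hole. To formalize this, we first apply a \emph{pruning} procedure, which discards or shrinks planks that are irrelevant near the hole without increasing cost. We then apply a \emph{flattening} step, which replaces the configuration near the hole by an essentially one-dimensional one. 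In the flattened configuration we can show that the covered measure double-counts a set of area at least $c\,a/r$. Here $a$ is the area of the part of $rD$ left uncovered, and $a\le \pi r^2$.

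Combining the two steps gives $W \ge 2 - C a + c\,a/r$, with the loss term bounded by $C a$ because the density is bounded. For $r<r^*:=c/C$ this yields $W\ge 2$, and we can handle $a=0$ directly by Bang's theorem. The main obstacle is the overlap estimate. The difficulty is showing that any plank cover of $D\setminus rD$ that is nearly optimal (total width close to $2$) has planks that interact with the hole in a way forcing overlap proportional to $a/r$, rather than some smaller quantity. Planks may be very thin, very numerous, and in arbitrary directions. My first attempt would be to project along each direction and use a Bang-style averaging over directions. This reduces the problem to a one-dimensional covering inequality on chords through the hole, where overlap of intervals can be counted directly. The pruning step is what makes this reduction legitimate.
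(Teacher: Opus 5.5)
Your outer reduction is correct and is exactly the paper's (Section~\ref{sec:CostToB}). The Archimedes projection is the relative-width measure $d\mu=dx/(2\pi\sqrt{1-\norm x^2})$ of Lemma~\ref{lem:width-measure}. Writing $\int_D M_\cP\,d\mu=\mu(D\setminus H)+\int_D(M_\cP-1)_+\,d\mu$ turns an area bound $I(\cP)\ge c|H|/r$ into $W\ge 2+\frac{|H|}{\pi}\bigl(\frac cr-\frac1{\sqrt{1-r^2}}\bigr)$.

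The gap is that this overlap bound, which is Theorem~\ref{thm:hole-cost} and the entire substance of the paper, is asserted rather than proved. ``Pruning'' and ``flattening'' appear only as names, and the one concrete plan you give cannot work. On a single chord, intervals can cover a segment minus a gap with zero overlap. So one-dimensional covering inequalities on chords give nothing, however you average them over directions. The cost only becomes visible once you use that the traces of one plank on many chords come from a single straight strip.

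More fundamentally, any argument that would also apply to fractional covers is doomed. Take planks tangent to $\partial(r_1D)$ from outside, with directions uniform and widths whose tail weight is $\Gamma(h)=2(h+r_1)/\sqrt{(h+r_1)^2-r_1^2}$; this solves the Abel equation from the introduction, where the paper takes $r_1=\tfrac12$. This fractional cover has multiplicity exactly $1$ on $D\setminus r_1D$ and $0$ on $r_1D$. Its weighted total width is $2\mu(D\setminus r_1D)=2\sqrt{1-r_1^2}<2$, for every $r_1>0$. So integrality must be used somewhere, and your sketch never says where. Likewise, your heuristic that the overlap sits near $\partial H$ supplies no mechanism for the factor $1/r$. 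In the paper the overlap is found on scale $\rho$, along deleted planks inside $5\rho D$.

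The missing idea is the paper's pruning-plus-flattening argument. Pruning deletes a crossing pair $(P,Q)$ ($P$ central, $Q$ deep, center lines meeting in $\rho D$) whenever the planks nearly parallel to the pair cover a $\tau$-fraction of its hull $K(P,Q)$.

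By Theorem~\ref{thm:pruning}, once no such pair remains the hole has area at least $\frac{\rho}{200r}|H|$. The reason is that cells which fail to grow are thin two-tipped slivers along central planks, the band $\Sigma_P$ beside such a plank is almost entirely uncovered, and a Vitali-type selection of disjoint bands yields a contradiction. Each deletion enlarges the hole by at most $4w$, so at least $\rho|H|/(1600rw)$ pairs are deleted.

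By Proposition~\ref{prop:pair-payment}, each deleted pair has area at least $\gamma\rho w$ of $P\cup Q$ inside the $\delta$-dense region $U_\delta$. Here the nearly parallel packet is flattened into Lipschitz ribbons of multiplicity at most $220$. Each ribbon meeting the triangle reaches one of its two arms, and for every ribbon point $z$ on an arm $\ell_S$ the disk $B(z,w/2)$ lies in $S\cap U_\delta$.

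Summing and using the Vitali bound $|U_\delta|\le 25I(\cP)/\delta$ gives $I(\cP)\gtrsim\rho^2|H|/r$, with the $1/r$ coming from the amplification count. Without an argument of this kind, your proof yields only $W\ge 2-O(r^2)$.
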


 Our solution can be thought of as a stability version of the trivial covering statement. If a collection of planks covers a planar set of area $a$, then clearly the sum of the areas of the planks in the set is at least $a$. The stability version asks what happens when a small part of the set is removed. Can this puncture be exploited so that the remaining region is covered more efficiently?
Our main result is that any hole leads to a loss in the covering mass.
Figure~\ref{fig:plank} illustrates planks covering the annulus. As the next theorem shows, the two holes in the middle yield a large amount of total overlap. 

\begin{figure}[]
    \centering \includegraphics[width=0.5\textwidth]{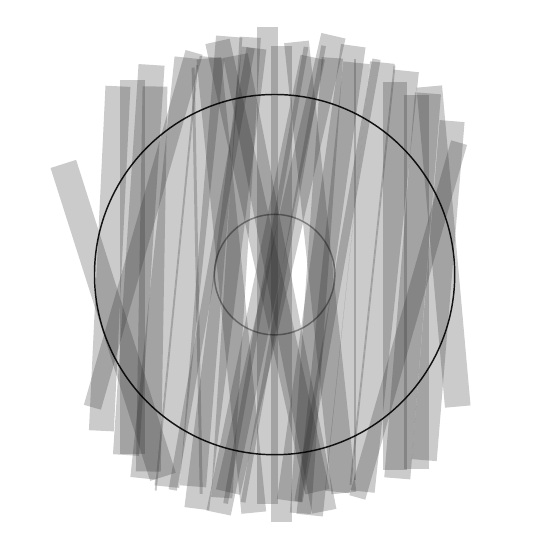}
    \caption{An illustration of planks covering the annulus.}
    \label{fig:plank}
\end{figure}

\begin{theorem}\label{thm:hole-cost}
There are constants $c_0,r_0>0$ such that the following holds.
Let $\cP$ be a finite family of planar planks  and denote the hole by
\[
 H := D \setminus \bigcup \cP .
\]
If $H \subset rD$ for $0 < r < r_0$ then
\begin{equation*}
\sum_{P \in \cP}|P \cap D|
\geq |D| +
\frac{c_0}{r}|H|,
\end{equation*}
where $|\cdot|$ denotes area.
\end{theorem}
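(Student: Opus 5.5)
We track the total overlap through the multiplicity function $m(x)=\#\{P\in\cP: x\in P\}$ on $D$. Since $\sum_P|P\cap D|=\int_D m$ and $m\ge 1$ off $H$,
\[
\sum_{P\in\cP}|P\cap D| = |D| - |H| + \int_D (m-1)_+ .
\]
It therefore suffices to prove that the excess $E:=\int_D(m-1)_+$ satisfies $E\ge (1+c_0/r)|H|$. As $r$ is small, this is the same as showing $E\gtrsim |H|/r$. The idea is that every plank is a long strip across $D$. A plank's chord through $D$ has length $\ge 2\sqrt{1-r^2}$ if it meets $rD$, and at least something comparable otherwise. So the planks surrounding the hole are forced to overlap along long stretches, while the hole itself has diameter at most $2r$.

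\textbf{Step 1 (pruning).} First reduce to a clean configuration. Discard planks that miss $rD$ but are entirely contained in the region covered by other planks. Also shrink each plank to the minimal subplank that still covers the same points of $D$ not covered by others. This only decreases $\sum|P\cap D|$ and does not change $H$, so the inequality for the pruned family implies it for the original one. After pruning, I expect each plank meeting $rD$ to contain a point of $rD\setminus H$ covered by it alone.

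\textbf{Step 2 (local geometry near the hole).} Fix a direction $\theta$ and consider the lines $\ell$ in direction $\theta$ crossing the hole. For almost every such line $\ell$, the set $\ell\cap H$ is a union of intervals inside $rD$. Each interval is bounded on both sides by boundary lines of planks that are not parallel to $\ell$, or else are parallel to $\ell$ and then meet it trivially. The plan is to average over $\theta$ using Crofton-type integral geometry. Every connected component of $H$ is a convex polygon, being a cell of the arrangement complement intersected with $D$. Each of its edges lies on the boundary line of some plank $P$, and that plank extends across all of $D$. Near the edge, on the side away from $H$, the plank $P$ must overlap with the planks covering the points just beyond $H$ in other directions.

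\textbf{Step 3 (flattening, the main obstacle).} We must convert ``the hole has area $|H|$ and diameter $\le 2r$'' into ``overlap of order $|H|/r$''. Heuristically, take a component $C$ of $H$ with perimeter $\per(C)$. Since $C\subset rD$ is convex, we have $|C|\le r\,\per(C)/2$, so $\per(C)\ge 2|C|/r$. Each edge $e$ of $C$ lies on the boundary of a plank $P_e$. Where $P_e$'s chord crosses the region beyond the neighbouring edges of $C$, it must overlap the planks $P_{e'}$ bounding adjacent edges, because the corners of $C$ are covered by both. I would try to show that each edge $e$ forces an overlap of area at least $c\,|e|\cdot(\text{something of order }1)$. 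The difficulty is that two adjacent bounding planks might be nearly parallel, so their overlap could be thin. The flattening mechanism would replace the planks around $C$ by planks whose boundary lines are rotated or translated to close the hole, keeping the directions. This pushes the configuration toward one where the overlap is computable, and the decrease in area is controlled by a monotonicity argument: translating a boundary line of a plank to cover part of $C$ changes $\sum|P\cap D|$ by $\approx$ chord length $\times$ displacement, which is $\asymp$ displacement, while the gain is only $\le 2r\times$ displacement. Iterating until $H=\emptyset$ and comparing with the trivial bound $\int m\ge |D|$ would give $\sum_P|P\cap D|\ge |D|+ (c_{\text{chord}}-2r)\cdot(\text{total displacement})$. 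Since the total displacement is at least $|H|/(2r)$, this yields $c_0/r$. The hard part is justifying that the closing moves can be chosen so that the displacements sum to $\gtrsim|H|/r$ without overlapping gains, and that each moved plank's chord inside $D$ stays of length $\ge 1$. Finally, the constants $c_0,r_0$ come out of these chord-length versus hole-diameter comparisons.
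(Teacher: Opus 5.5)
\textbf{There is a genuine gap, and it is in Step 3, where the whole argument has to happen.} The displacement bookkeeping points in the wrong direction. Suppose you widen or translate planks until the hole closes. You get a covering family $\cP'$ with $\int_D m'=\int_D m+\Delta$, where $\Delta$ is the added area, roughly chord length times total displacement. The trivial bound $\int_D m'\ge|D|$ then gives only $\int_D m\ge|D|-\Delta$. Your comparison (cost $\asymp$ displacement, hole covered $\le 2r\,\times$ displacement) proves something else: the \emph{new} family has overlap at least $\int_D(m-1)_+ +(c_{\mathrm{chord}}-2r)\cdot(\text{displacement})$. That is a statement about $\cP'$ and puts no lower bound on the overlap of $\cP$. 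Showing that local patching is expensive does not show that the given family is wasteful, and adding material can never certify that.

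A sanity check: this accounting never uses that planks are $0/1$ objects. The paper's fractional cover, with value $1$ on $D\setminus\frac12D$ and $0$ on $\frac12D$, has zero overlap, so bookkeeping of this kind cannot give the bound.

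The per-edge claim is also unsupported. The mechanism you cite is that the two planks at a corner of $C$ both cover it. That only produces their intersection, a rhombus of area about $w^2/\sin\angle(P_e,P_{e'})$. This is negligible compared with $|e|$ once planks are thin, and thinness can be assumed by subdivision. In the paper the overlap is not found near the hole at all. It is found in dense disks along planks out to distance $5\rho$, far away on the scale $r$. Your Step 1 keeps $H$ fixed, so it gives no leverage either.

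\textbf{The missing idea is to go the other way: remove planks, show the hole must grow a lot, and charge each removal to overlap.}

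First, the paper repeatedly deletes a removable candidate pair $(P,Q)$. Here $P$ is central, $Q$ is deep, their center lines cross in $\rho D$, and the planks nearly parallel to $P$ or $Q$ cover a $\tau$-fraction of $K(P,Q)$.

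Second, Theorem~\ref{thm:pruning} shows that when no removable pair is left, the hole has area at least $\frac{\rho}{200r}|H|$. The reasons are:
\begin{itemize}
\item bad cells are two-tipped slivers bounded by central planks;
\item non-removability of the extremal pair leaves an almost empty band $\Sigma_P$ of area $\asymp\rho^2\alpha_P$ beside each such plank;
\item a Vitali-type selection makes these bands disjoint.
\end{itemize}
Since each deletion enlarges the hole by at most $4w$, at least $\rho|H|/(1600rw)$ pairs are deleted.

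Third, Proposition~\ref{prop:pair-payment} shows that each removable pair satisfies $|U_\delta\cap(P\cup Q)\cap5\rho D|\ge\gamma\rho w$. The proof flattens the nearly parallel packet into Lipschitz ribbons of multiplicity at most $220$. It then forces each ribbon that crosses the triangle $T$ to exit through an arm. Finally it shows that every ribbon point on an arm lies in a $\delta$-dense disk.

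Fourth, the deleted planks are distinct, so $\sum_i|S_i|\le|U_\delta|+I(\cP)$. The Vitali covering lemma gives $|U_\delta|\le 25I(\cP)/\delta$, and combining these yields the theorem.

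So the factor $1/r$ comes from the amplification factor $\lambda=\rho/(200r)$, not from comparing the hole's perimeter with its area. Integrality enters through the polygonal cell structure and through counting planks in the flattening step.
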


Theorem~\ref{thm:annulus} immediately follows from Theorem~\ref{thm:hole-cost} using the classical relative-width measure on the disk \cite{gardner1988relative}. 
For completeness, this reduction is described in Section~\ref{sec:CostToB}.

Theorem~\ref{thm:hole-cost} shows that if planks cover a punctured disk then the planks' mass inside $D$ can be moved around to cover all of $D$ (including the hole). It also shows that the only configuration where there are no leftovers after moving mass around is when there is no hole and there is essentially a single plank.

Theorem~\ref{thm:hole-cost} applies to all origin-symmetric convex planar bodies $K$, not just the disk $D$. The reason is that the assertion is invariant under affine maps, and John's theorem says that 
$K$ has a position $\tilde K$ such that
$D \subseteq \tilde K \subseteq \sqrt{2}D$.
In other words, the overlap cost of planar holes is not special to punctured disks but holds for all punctured symmetric bodies. 

There is an equivalent formulation using excess overlap. 
 Define the counting function
\[
 M_{\cP}(x)=\sum_{P\in\cP}\1_P(x)
\]
so that $\int_D M_\cP = \sum_P |P \cap D|$.
Define the overlap cost
\[
 I(\cP)=\int_D (M_{\cP}(x)-1)_+\,dx.
\]
The theorem says that
ratio between the overlap $I(\cP)$ and the hole area $|H|$ tends to $\infty$ as $r \to 0$:
\[I(\cP) \geq \frac{c_1}{r} |H|.\]

The guiding principle in the proof is that the hole generates curvature while planks are straight. Elementary planar Gauss-Bonnet says that the boundary of a hole leads to total curvature of at least $2 \pi$. The straight nature of the planks does not allow to disperse this curvature without paying in overlap. In fact, the smaller $r$ is, the larger the overlap, because the boundary of $D$ is ``further away''.

The planks can be thought of, without loss of generality, as many thin equal-width planks.
Any proof of the above statement must therefore overcome the surprising nature of line configurations in the plane. For example, the existence of
Besicovitch--Kakeya constructions that pack segments in many directions into arbitrarily small area \cite{besicovitch1928kakeya}, and Davies's construction 
showing how to cover a  planar body by lines with zero area outside the body
\cite{davies1971some}.  

The fact that lines are straight is a source of difficulty for us. 
Curved objects cannot concentrate in the same way as straight ones, because curvature constrains how many of them can  nearly overlap for long distances. Stein proved the spherical maximal theorem in dimension $\ge3$ \cite{stein1976maximal}. The harder planar case was settled by Bourgain in his circular maximal theorem~\cite{bourgain1986averages}. Schlag exposed the connections to incidence geometry of nearly tangent circles \cite{schlag1997generalization,schlag1998geometric,schlag2003continuum}. Wolff showed that a planar Borel set containing a circle of every radius has Hausdorff dimension two \cite{wolff1997kakeya}. 

Theorem~\ref{thm:hole-cost} describes a genuine integrality phenomenon. 
There is a fractional construction that has a large hole and no overlap.
Namely, there is a fractional plank cover with value $1$ on $D \setminus (D/2)$ and value $0$ on $D/2$.
All planks are tangent to $\partial(\tfrac12D)$ from the outside. Let $P_{\theta,h}=\{x:\ \tfrac12\le\langle x,e_\theta\rangle\le\tfrac12+h\}$ with $\theta$ uniform on $[0,2\pi)$ and the width $h\in(0,\tfrac12]$ distributed so that the weight of planks of width at least $h$ is some $\Gamma(h)$. 
For $\|x\|=d\ge \tfrac12$, let
$
\alpha=\arccos\!\left(\frac1{2d}\right)$.
The fractional mass covering $x$ is
$$
\frac1\pi\int_0^\alpha \Gamma\!\left(d\cos\theta-\frac12\right)\,d\theta.
$$
This is an Abel integral equation, and one can choose
$\Gamma$ so that it is $1$ for every $d \in [1/2,1]$.

The fractional lossless cover shows that many attractive ideas are useless for the problem we care about. One such possible approach is using linear programming via relaxations and duality. Another approach is replacing the planks by convex functions and using complex analytic tools. A third approach is symmetrizing the given planks cover. All these approaches are bound to fail.

To overcome all the above difficulties, 
the proof introduces two geometric ingredients: a \emph{pruning} or curvature-peeling procedure and a {\em flattening} mechanism.

The pruning procedure removes planks until a stopping condition is met.
The first observation is that a hole forces pairs of planks to cross. The pruning process repeatedly deletes such a pair as long as the pair is removable, meaning that the planks nearly parallel to it cover a fixed fraction of the hull of the pair. The two main claims are (i) each deletion is paid for by overlap and (ii) when no removable pair is left the hole has grown by a large factor. Claim (i) is proved via the flattening mechanism described next. Claim (ii) is proved by elementary geometry and a Vitali-type selection step.

The flattening mechanism replaces a nearly parallel packet of planks with unbounded counting function $M$ by bounded-multiplicity Lipschitz ribbons. 
This reduces a two-dimensional covering statement to counting how many ribbons cross the two arms of a triangle, and each crossing is converted back into a disk of large multiplicity. Our replacement is  algorithmic, but conceptually it is a max-flow-min-cut behavior (and a previous version of the argument used this LP perspective). 
An important idea is to route the new ribbons through dense disks (where the local overlap is significant), and the main point is that there is enough room to re-route all planks.

\subsection{AI methodology}

All major new definitions and constructions---including the pruning procedure, the flattening mechanism and the notion of dense disks---were developed without the support of any AI tool. Attempts to use these tools to come up with proofs were unsuccessful, with one exception: the Vitali-type selection in the analysis of (ii) in the pruning procedure was worked out by Claude Fable 5.1.
The text itself was written with the help of both ChatGPT 5.6 and Claude. 

\subsection{Acknowledgements} We wish to thank Zachary Chase and Fedor Nazarov for helpful remarks in various stages of this work. The authors are supported by the DNRF Chair grant 20231101-28697.

\section{Pruning amplifies the hole}

In this section we define a pruning procedure that repeatedly removes certain pairs of crossing planks. The main claim is that when the procedure terminates, the new uncovered set has expanded area-wise by a definite factor compared to the old one.
Let $\cP$ be a finite family of planks, and denote the hole by $H:=D\setminus\bigcup P$. 

\begin{definition}
For a plank $P$, denote its distance from the origin by $d_P=\dist(0,\ell_P)$
where $\ell_P$ is its center line. Call $P$ \emph{deep} if $d_P\le0.75\rho$ and \emph{central} if $d_P\le0.07\rho$ where
\[\rho=\frac1{12}.\]

\end{definition}
\begin{definition}
An ordered pair $(P,Q)$ of distinct members of $\cP$ is a \emph{candidate pair} if $P$ is central, $Q$ is deep, and $\ell_P\cap\ell_Q$ is a single point in $\rho D$.  For a candidate pair, the set of almost parallel planks is denoted by 
\[
 N(P,Q):=\{R\in\cP:\min\{\ang(R,P),\ang(R,Q)\}<2\ang(P,Q)\}
\]
and the convex hull of the planks is denoted by
\[
 K(P,Q):=\conv\bigl((P\cap\rho D)\cup(Q\cap\rho D)\bigr);
\]
see Figure \ref{fig:candidate}.
The pair is \emph{removable} if
\begin{equation}\label{eq:removable}
 \left|K(P,Q)\cap\bigcup N(P,Q) \right|\ge \tau |K(P,Q)|,
\end{equation}
where
\[ \tau=10^{-4}.\]
\end{definition}
\begin{definition}
Starting from $\cQ=\cP$, as long as there is a removable candidate pair in $\cQ$ delete both members of the pair from $\cQ$. A family without any removable candidate pairs is called \emph{final}; see Figure
\ref{fig:final-family}.
\end{definition}

\begin{remark}
Different pruning order lead to different final families. Our results hold for all final families, regardless of the pruning order. 
\end{remark}

\begin{figure}[]
    \centering 
    \begin{tikzpicture}[scale=0.8,x=1cm,y=1cm,line cap=round,line join=round]
  \def\R{4.0}
  \def\rhoR{2.0}
  \def\halfplank{0.17}
  \pgfmathsetmacro{\centralR}{0.07*\rhoR}
  \pgfmathsetmacro{\deepR}{0.75*\rhoR}

  \newcommand{\Nplank}[3]{%
    \begin{scope}[rotate=#1]
      \path[fill=black,fill opacity=#3,draw=none]
        (-5.35,{#2-\halfplank}) rectangle (5.35,{#2+\halfplank});
    \end{scope}%
  }

  \Nplank{86}{-2.55}{0.19}
  \Nplank{83}{-1.85}{0.19}
  \Nplank{80}{-1.15}{0.19}
  \Nplank{76}{-0.55}{0.19}
  \Nplank{72}{ 0.78}{0.19}
  \Nplank{68}{ 1.48}{0.19}
  \Nplank{63}{ 2.10}{0.19}
  \Nplank{58}{-2.35}{0.19}
  \Nplank{54}{-1.60}{0.19}
  \Nplank{50}{-0.82}{0.19}
  \Nplank{47}{ 0.80}{0.19}
  \Nplank{40}{ 1.58}{0.19}
  \Nplank{36}{ 2.30}{0.19}

  \Nplank{78}{0.00}{0.48}
  \Nplank{43}{0.40}{0.48}

  \draw[black,opacity=0.72,line width=0.80pt]
    (-0.25,-1.98) -- (-0.58,-1.91) -- (-1.61,-1.19) -- (-1.79,-0.89)
    -- (0.25,1.98) -- (0.58,1.91) -- (1.01,1.72) -- (1.30,1.52) -- cycle;

  \draw[black,line width=0.72pt] (0,0) circle (\R);
  \draw[black,dashed,dash pattern=on 3pt off 2pt,line width=0.60pt]
    (0,0) circle (\rhoR);

  \node[font=\small,fill=white,fill opacity=.82,text opacity=1,inner sep=1.2pt]
    at (1.25,-0.88) {$K(P,Q)$};
  \node[font=\small,fill=white,fill opacity=.82,text opacity=1,inner sep=1.2pt]
    at (-2.15,1.35) {$\rho D$};
  \node[font=\small,fill=white,fill opacity=.82,text opacity=1,inner sep=1.2pt]
  at (-3.35,2.75) {$D$};
  \node[font=\small,fill=white,fill opacity=.82,text opacity=1,inner sep=1pt]
    at (0.25,3.20) {$P$};
  \node[font=\small,fill=white,fill opacity=.82,text opacity=1,inner sep=1pt]
    at (2.59,2.30) {$Q$};
\end{tikzpicture}
    \caption{An illustration of a candidate pair of planks $P,Q$. The central plank $P$ and the deep plank $Q$ are shown in dark gray, while planks from $N(P,Q)$ are shown in light gray. The outer circle is $D$, and the dashed inner circle is $\rho D$; for illustration, we take $\rho=0.5$. The boundary of $K(P,Q)$ is shown as well.}
    \label{fig:candidate}
\end{figure}

\begin{figure}[]
    \centering 
    \begin{tikzpicture}[scale=0.8,x=1cm,y=1cm,line cap=round,line join=round]
  \def\R{4.0}
  \def\rhoR{2.0}
  \def\halfW{0.0766} 


  \begin{scope}[rotate=8]
    \foreach \ang/\off/\shade/\L in {
      -50.92/3.2109/57/2.7775,
      -49.85/-3.5707/58/2.7934,
      -47.77/3.5513/45/2.4371,
      -47.53/2.1261/57/3.8945,
      -45.97/3.6546/58/2.4562,
      -45.95/2.6114/47/3.7391,
      -45.62/-2.9649/49/3.0633,
      -45.56/-3.0020/60/3.4223,
      -44.91/-3.2313/44/3.2099,
      -41.45/-2.1285/59/3.6727,
      -39.07/2.5418/53/3.6272,
      -38.45/3.4391/51/2.6846,
      -37.83/3.4866/57/2.4060,
      -37.04/-3.1169/58/3.3321,
      -35.64/-3.1132/50/3.1493,
      -35.04/2.6412/60/3.5893,
      -33.63/3.4044/49/2.7363,
      -32.15/3.6519/60/2.3637,
      -31.42/3.1089/51/3.3151,
      -28.44/2.8388/49/3.4169,
      -27.47/2.8708/44/3.4528,
      -27.32/3.4281/44/2.9942,
      -27.02/-2.4800/53/3.5620,
      -24.13/2.9956/54/3.2381,
      -24.10/-3.4417/52/2.6666,
      -22.86/2.3888/47/3.8871,
      -22.11/-2.6795/49/3.6975,
      -20.32/-3.5951/49/2.6967,
      -17.17/-2.8017/56/3.5682,
      -16.90/-3.2880/59/2.7481,
      -15.84/2.1174/46/3.9374,
      -10.77/2.1580/60/3.8958,
      -7.82/-3.1494/48/3.1620,
      -6.54/-3.4650/55/2.9186,
      -5.28/-2.2123/58/3.8369,
      -4.74/-2.5867/55/3.5319,
      -0.83/3.3985/52/2.6654,
      -0.48/-2.6480/51/3.5651,
      3.46/-3.1741/60/3.1525,
      4.42/-2.8477/60/3.4189,
      4.93/3.2718/52/2.9054,
      7.15/-3.3280/53/2.6631,
      8.14/3.2751/59/3.0812,
      8.71/3.0596/51/3.3268,
      9.88/-3.0242/54/3.3241,
      12.72/-3.3428/53/2.8417,
      12.97/-3.6260/50/2.3976,
      13.79/-3.6145/53/2.2245,
      16.02/2.5952/46/3.4751,
      16.47/-2.7300/48/3.3648,
      16.67/3.3898/53/2.9375,
      16.68/-2.5934/55/3.6771,
      16.74/2.2528/58/3.9312,
      17.08/-2.5879/50/3.6090,
      17.61/-2.8426/57/3.3574,
      21.02/-2.3828/44/3.8014,
      26.06/-2.5261/59/3.5098,
      26.20/2.3886/54/3.8318,
      26.26/3.4630/48/2.9823,
      30.14/2.7350/47/3.4955,
      31.27/2.4318/59/3.5064,
      33.84/-2.7734/51/3.4471,
      35.96/3.6569/47/2.1629,
      36.39/3.2812/47/3.1437,
      36.78/-3.4810/52/2.8180,
      39.52/2.9285/47/3.3094,
      42.82/3.6318/51/2.2563,
      43.17/3.1496/55/2.9695,
      43.78/-2.1644/55/3.6534,
      44.33/2.7413/52/3.4361,
      48.85/-2.4727/54/3.6095,
      50.36/-2.9751/54/3.1363,
      52.15/-3.5251/60/2.4572,
      54.41/3.4487/52/2.7831,
      68.75/-2.6829/46/3.3894
    }{%
      \begin{scope}[
        shift={({-\off*sin(\ang)},{\off*cos(\ang)})},
        rotate=\ang
      ]
        \path[fill=black!55,opacity=0.45]
          (-\L,-\halfW) rectangle (\L,\halfW);
      \end{scope}
    }%

    \foreach \ang/\off/\shade/\L in {
      -53.04/1.9700/61/3.8591,
      -38.18/1.9983/62/3.9969,
      -31.36/1.9854/61/3.9506,
      -18.84/-2.0029/62/4.0693,
      -0.15/-2.0242/66/4.0250,
      5.54/1.9675/66/3.8114,
      30.02/2.0056/59/4.0102,
      55.78/1.9763/66/3.9222,
      64.83/-2.0124/67/4.0698,
      76.44/2.0305/60/3.8075
    }{%
      \begin{scope}[
        shift={({-\off*sin(\ang)},{\off*cos(\ang)})},
        rotate=\ang
      ]
        \path[fill=black!55,opacity=0.45]
          (-\L,-\halfW) rectangle (\L,\halfW);
      \end{scope}
    }%

    \foreach \ang/\off/\shade/\L in {
      -9.32/-1.2688/64/4.3663,
      -2.89/-1.5025/68/4.0553,
      -0.57/-0.9811/61/4.2951,
      -0.09/-1.6693/63/4.1394,
      4.20/-0.7706/62/4.3763,
      6.30/1.3799/68/4.2659,
      7.80/1.2007/61/4.2187,
      8.98/-0.5707/59/4.4402,
      11.30/1.6106/63/4.0562,
      15.43/-0.3134/65/4.4800,
      19.73/-0.1450/56/4.3135,
      23.28/0.6941/63/4.4426,
      27.51/0.4332/58/4.4056,
      27.99/0.2325/66/4.3033
    }{%
      \begin{scope}[
        shift={({-\off*sin(\ang)},{\off*cos(\ang)})},
        rotate=\ang
      ]
        \path[fill=black!55,opacity=0.45]
          (-\L,-\halfW) rectangle (\L,\halfW);
      \end{scope}
    }%
  \end{scope}

  \draw[black,line width=0.72pt] (0,0) circle (\R);
  \draw[black,dashed,dash pattern=on 3pt off 2pt,line width=0.60pt]
    (0,0) circle (\rhoR);

  \node[font=\small,fill=white,fill opacity=.82,text opacity=1,inner sep=1.2pt]
    at (-2.15,1.35) {$\rho D$};
  \node[font=\small,fill=white,fill opacity=.82,text opacity=1,inner sep=1.2pt]
    at (-3.35,2.75) {$D$};
\end{tikzpicture}
    \caption{An illustration of a final family $\mathcal Q$. The outer circle is $D$, and the dashed inner circle is $\rho D$; for illustration, we take $\rho=0.5$. The part of $\cQ$ inside $\rho D$ is sparse. }
    \label{fig:final-family}
\end{figure}

\begin{remark}
The set $N(P,Q)$ is always taken in the original family $\cP$; therefore removability does not depend on removal order.
\end{remark}

We may and will assume that all planks have common width $w$ and assume
\begin{equation}\label{eq:pruning-assumptions}
 w\le \frac r{100},\qquad 0<r\le\frac{\rho}{400},\qquad
 \varnothing\ne H \subset rD.
\end{equation}
Denote by $\ang(P,Q)\in(0,\pi/2]$ the acute angle between the center lines of the planks $P,Q$
(for parallel planks,
it is zero). We also assume that any two nonparallel members $P,Q\in\cP$ satisfy
\begin{equation}\label{eq:angle-thin-basic}
 w\le \frac{\rho}{50}\ang(P,Q).
\end{equation}
This assumption can be forced by subdivision of planks (which preserves angles).

\begin{theorem}\label{thm:pruning}
Assume \eqref{eq:pruning-assumptions} and \eqref{eq:angle-thin-basic}.
If $\cQ$ is final and $H'$ is its hole:
\[
H'=int(D) \setminus\bigcup \cQ,
\]
then
\begin{equation}\label{eq:pruning-amplification}
 |H'|\ge \frac{\rho}{200r}|H|.
\end{equation}
\end{theorem}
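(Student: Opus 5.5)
The plan is to show that a final family leaves a large uncovered region inside $\rho D$. Since $H\subset rD$ and $|H|\le \pi r^2$, the bound \eqref{eq:pruning-amplification} follows once we prove
\[
|H'| \ge \frac{\rho}{200 r}|H|, \qquad\text{for instance via}\qquad |H'|\ge c\,\rho\, r\ \text{ or }\ |H'|\ge c\,\rho\,\mathrm{diam}(H)\cdot(\text{width of }H).
\]
The strategy is: first locate, from the nonempty hole $H$, a point $x_0\in H$ with $\|x_0\|\le r$. Then use finality of $\cQ$ to show that the planks of $\cQ$ meeting $\rho D$ near $x_0$ are sparse in a quantitative sense. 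Finally, convert that sparsity into a lower bound on uncovered area.

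\textbf{Step 1 (crossing structure near the hole).} Let $x_0\in H$. We may take $H$ to be a union of cells of the line arrangement. Then every boundary point of $H$ lies on some plank of $\cP$ whose center line is within $r+w\le 1.01r$ of the origin, so that plank is central, since $r\le \rho/400$. Walk around $\partial H$. By the Gauss--Bonnet observation, the tangent direction turns by $2\pi$, so there are consecutive planks $P,Q$ bounding $H$ whose lines meet in $rD\subset\rho D$ at a definite angle. Any such pair is a candidate pair, and $K(P,Q)$ contains a triangle of area $\gtrsim \rho^2\ang(P,Q)$ with apex near $H$.

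\textbf{Step 2 (finality gives sparsity).} Consider a candidate pair $(P,Q)$ whose members both survive in $\cQ$. Finality means $(P,Q)$ is not removable, so by \eqref{eq:removable} the almost-parallel planks $N(P,Q)$ cover less than a $\tau$ fraction of $K(P,Q)$. Planks of $\cQ$ not in $N(P,Q)$ make angle at least $2\ang(P,Q)$ with both $P$ and $Q$. Using \eqref{eq:angle-thin-basic}, each such plank crosses the thin wedge $K(P,Q)$ in a region of area at most about $w\rho$, which is comparable to a $w/(\rho\ang)$ fraction of $K$. The key intermediate claim is then a uniform statement: for every surviving candidate pair, $|K(P,Q)\setminus\bigcup\cQ|\ge \tfrac12|K(P,Q)|$. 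This step is the main obstacle. Transversal planks that are not almost parallel could still fill $K(P,Q)$ if there are many of them. To handle this, I would argue that many transversal planks through $K(P,Q)$ must themselves pair up with $P$ or $Q$ into candidate pairs with smaller $K$. Then I would induct on the angle, starting from the candidate pair of minimal angle.

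\textbf{Step 3 (removed pairs and the Vitali step).} If the pair found in Step 1 was removed, then $P$ and $Q$ are absent from $\cQ$, and the region they covered near $H$ is potentially uncovered. Collect, over all directions around $\partial H$, the wedges $K(P,Q)$ of the pairs involved, whether surviving or removed. Apply a Vitali-type selection to extract a disjoint subfamily carrying a constant fraction of their union. Each wedge has length $\asymp\rho$ and total angular measure $\gtrsim 2\pi$. Their union therefore contains an uncovered portion of area $\gtrsim \rho\cdot \mathrm{per}(H)\gtrsim \rho |H|/r$. Here the last inequality uses the isoperimetric bound $|H|\le r\,\mathrm{per}(H)/2$ for $H\subset rD$ convex-like cells, with $\mathrm{per}(H)$ replaced by $\HH(\partial H)$ in general. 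Tracking the constants $\tau$, $0.07$ and $0.75$ then yields the factor $\rho/(200r)$.
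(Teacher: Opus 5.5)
There is a genuine gap, and it sits exactly where the difficulty of the theorem lies. In Steps 1 and 3 you treat the $2\pi$ of Gauss--Bonnet turning as $2\pi$ of plank angle. But at a vertex with exterior angle $\phi$, the two bounding planks make angle $\min\{\phi,\pi-\phi\}$ (Lemma~\ref{lem:vertex}). A two-tipped sliver therefore has tiny plank angles at every vertex, so there need not be any pair at a ``definite angle''. The wedges $K(P,Q)$ you collect then have total area of order $\rho^2$ times these tiny angles, unrelated to $\rho\per(H)$.

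In fact the bound $|H'|\gtrsim\rho\per(H)$ that Step 3 aims for, and likewise $|H'|\ge c\rho r$, is false. Tile $D$ by horizontal planks of width $w$ and drop the one occupying $\{0\le y\le w\}$. Cover that strip by the members, meeting it, of a tiling of the plane by width-$w$ planks at angle $\beta=100w/r$. Then drop the member whose intersection with the strip is a rhombus of side $\approx r/100$ with a vertex at the origin. With $r=\rho/400$ fixed, all assumptions hold for small $w$. Planks of the same direction never form a candidate pair, so every deletion uses one of the $O(\beta/w)=O(1/r)$ tilted planks. Hence $|H'|\le|H|+O(w/r)\to0$ as $w\to0$, while $\rho\per(H)\approx\rho r/25$ stays fixed.

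Step 3 also gives no reason why the region vacated by a removed pair stays uncovered in $\cQ$. The paper never looks at removed pairs. It works with the cells of $H'\supset H$, whose edges lie on surviving planks, and discards cells reaching distance $0.06\rho$ via Lemma~\ref{lem:convex-growth}. Under the contradiction hypothesis $|H'|<\lambda|H|$, it shows $\alpha_P\le0.19r/\rho$ for every central survivor with $I_P\ne\varnothing$. The remaining (bad) cells are then slivers within $2h_{P(G)}$ of central carrier lines. A Vitali selection of disjoint bands (Lemma~\ref{lem:band-selection}) gives $\sum_{G\ \mathrm{bad}}|G\cap rD|\le96r\sum_jh_j$ against $|H'|\ge0.97\rho\sum_jh_j$. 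The gain $\rho/r$ is the ratio of a band's length to the length of its part inside $rD$, not an isoperimetric effect.

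Step 2 correctly identifies non-removability as the source of sparsity, but its mechanism runs the wrong way. A plank outside $N(P,Q)$ makes angle at least $2\ang(P,Q)$ with $P$. Pairing it with $P$ therefore produces a candidate pair with a larger angle and a larger hull, not a smaller one, so an induction starting from the minimal angle never controls the transversal planks. Moreover, the uniform claim $|K(P,Q)\setminus\bigcup\cQ|\ge\frac12|K(P,Q)|$ is not what finality provides. Planks that are not deep never belong to a candidate pair, so non-removability of $(P,Q)$ says nothing about transversal ones covering the part of $K(P,Q)$ far from the origin.

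The missing idea is to pair a central survivor $P$ with the partner $Q\in I_P$ of \emph{maximal} angle $\alpha_P$. One then looks only at the band $\Sigma_P$ of height $0.07\rho\alpha_P$ next to $\ell_P$ inside $0.64\rho D$. Every plank of $\cQ$ meeting this band is deep. If such a plank made angle at least $2\alpha_P$ with $P$, its center line would cross $\ell_P$ inside $\rho D$, putting it in $I_P$ with angle exceeding $\alpha_P$. Thus $\Sigma_P$ is covered only by $N(P,Q)$, and non-removability leaves all but a $0.004$ fraction of it in $H'$ (Lemma~\ref{lem:surviving-band}). This single estimate yields $|H'|\gtrsim\rho^2\alpha_P$. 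It also forces all central angles to be small under the contradiction hypothesis, and supplies the disjoint uncovered bands used in the final count.
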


\begin{remark}
The reason we take the interior of $D$ is to get an open set $H'$.
\end{remark}
The rest of this section is devoted to proving the theorem. 

\subsection{Cells and elementary geometry}

\begin{definition}
A \emph{cell} is a connected component of $H'$.    
\end{definition}

\begin{fact}\label{fact:cell-convex}
Every cell is open and convex.
\end{fact}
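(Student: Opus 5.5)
The plan is to write $H'$ as an intersection of open convex sets and intersect with the given connected component. Every member of $\cQ$ is a closed plank $P$, i.e.\ a closed strip between two parallel lines. Its complement $\R^2\setminus P$ is the disjoint union of two open half-planes. Hence
\[
H'=\operatorname{int}(D)\cap\bigcap_{P\in\cQ}\bigl(\R^2\setminus P\bigr)
\]
is a finite intersection of open sets, because $\cQ\subset\cP$ is finite. So $H'$ is open. Every connected component of an open subset of $\R^2$ is open, since the plane is locally connected. This gives openness of each cell.

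For convexity, expand the intersection. Each factor $\R^2\setminus P$ equals $A_P^+\sqcup A_P^-$ with $A_P^\pm$ open half-planes. Distributing the intersection over these disjoint unions gives
\[
H'=\bigsqcup_{\sigma}\Bigl(\operatorname{int}(D)\cap\bigcap_{P\in\cQ}A_P^{\sigma(P)}\Bigr),
\]
where $\sigma$ ranges over sign choices $\cQ\to\{+,-\}$. Each piece $C_\sigma$ is an intersection of convex open sets, hence convex and open. Convex sets are connected, so each nonempty $C_\sigma$ is connected.

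The pieces are pairwise disjoint: for $\sigma\ne\sigma'$, some $P$ has $\sigma(P)\ne\sigma'(P)$, and $A_P^+\cap A_P^-=\varnothing$. Thus $H'$ is a finite disjoint union of nonempty open connected sets. Take a cell $C$, i.e.\ a component of $H'$. Since the $C_\sigma$ are open and disjoint, $C$ is covered by them, and $C$ is connected, $C$ lies in a single $C_\sigma$. Conversely, $C_\sigma$ is a connected subset of $H'$ meeting $C$, so $C_\sigma\subseteq C$. Therefore $C=C_\sigma$ is open and convex.

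There is no real obstacle here. The only point needing care is to use the disjointness of the two open half-planes of each plank complement, which is why the planks are taken closed and $D$ is replaced by its interior.
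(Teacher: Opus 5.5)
Your proof is correct and matches the standard argument the paper leaves implicit, since it states this fact without proof: writing $H'=\operatorname{int}(D)\cap\bigcap_{P\in\cQ}(A_P^+\sqcup A_P^-)$ splits $H'$ into the pairwise disjoint open convex pieces $\operatorname{int}(D)\cap\bigcap_{P}A_P^{\sigma(P)}$, and each nonempty piece is exactly one cell. Your observation that this needs closed planks and $\operatorname{int}(D)$ rather than $D$ agrees with the paper's own remark on why the interior is taken.
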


\begin{lemma}\label{lem:convex-growth}
Let $G\subset\R^2$ be convex, let $K=G\cap rD\ne\varnothing$, and let $v\in G$ with $\norm v>r$.  Then
\begin{equation}\label{eq:convex-growth}
 |G|\ge \frac{(\norm v-r)^2}{4r(\norm v+r)}|K|.
\end{equation}
\end{lemma}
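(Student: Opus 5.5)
The plan is a homothety argument centered at $v$. Set $s=\norm v$. Because $G$ is convex and contains both $v$ and $K$, it also contains $\conv(\{v\}\cup K)$. For any $\lambda\in[0,1]$ the homothetic image $K_\lambda:=v+\lambda(K-v)$ therefore lies in $G$, and $|K_\lambda|=\lambda^2|K|$. We do not need a single copy to be large. Instead we will find many values of $\lambda$ for which the copies $K_\lambda$ are pairwise disjoint; then $|G|\ge\sum_\lambda\lambda^2|K|$, and the task reduces to making this sum large.

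For disjointness, note that $K\subset rD$, so $K_\lambda\subset v+\lambda(rD-v)$. This is a disk with center $(1-\lambda)v$ and radius $\lambda r$. Its distance from the origin along the direction of $v$ lies in the interval $[(1-\lambda)s-\lambda r,\,(1-\lambda)s+\lambda r]$. Two copies $K_\lambda$ and $K_\mu$ with $\mu<\lambda$ are disjoint as soon as these intervals are disjoint, that is, when $(1-\mu)s-\mu r>(1-\lambda)s+\lambda r$, or equivalently $(\lambda-\mu)s>(\lambda+\mu)r$.

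This suggests a geometric sequence. Start from $\lambda_0=1$ (the set $K$ itself) and put $\lambda_{k+1}=q\lambda_k$, where $q$ is chosen slightly below the threshold $\frac{s-r}{s+r}$; at the threshold itself one has to take care with the closed versus open boundaries, but since only the measure matters, boundary overlaps are negligible. Summing the areas gives
\[
|G|\ \ge\ |K|\sum_{k\ge0}q^{2k}=\frac{|K|}{1-q^2},\qquad 1-q^2=\frac{(s+r)^2-(s-r)^2}{(s+r)^2}=\frac{4sr}{(s+r)^2}.
\]
Thus $|G|\ge\frac{(s+r)^2}{4sr}|K|$.

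It remains to compare this with the claimed bound. We have $\frac{(s+r)^2}{4sr}\ge\frac{(s-r)^2}{4r(s+r)}$, because $(s+r)^3\ge s(s-r)^2$ trivially. This gives \eqref{eq:convex-growth}, in fact with room to spare.

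The only delicate point is the disjointness claim. We argued about projections onto the direction of $v$, and it is enough to check that these projected intervals are separated; that is exactly the inequality above. A cruder alternative would use only the single copy $\lambda=1$ together with the cone from $v$ over a chord of $K$, but the geometric-series argument already yields the stated constant directly. So the main obstacle is really just getting the disjointness condition right, and it amounts to a one-line computation.
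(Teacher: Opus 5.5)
Your argument is correct, and it takes a genuinely different route from the paper's.

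The paper uses polar coordinates centred at $v$. Let $\Theta$ be the set of directions whose ray from $v$ meets $K$. Radial sections of $K$ lie in $[s-r,s+r]$, so $|K|\le 2r(s+r)|\Theta|$. Convexity puts the segment from $v$ to the far end of each section inside $G$, so $|G|\ge\frac12(s-r)^2|\Theta|$. Dividing gives exactly the stated constant.

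You instead pack the pairwise disjoint homothets $v+q^k(K-v)$, $k\ge0$, into $\conv(\{v\}\cup K)\subset G$, and separate them by projecting onto $v/s$. Checking consecutive pairs suffices, since the projected intervals are then linearly ordered. The boundary issue is handled either by taking $q<\frac{s-r}{s+r}$ and letting $q$ increase, or by noting that at the threshold two copies meet only inside a line.

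What you gain is a cleaner and larger constant, $\frac{(s+r)^2}{4sr}$. It is never below $1$ and dominates $\frac{(s-r)^2}{4r(s+r)}$. Both constants behave like $s/(4r)$ as $s/r\to\infty$, which is the correct order: a thin sliver of $rD$ along the direction of $v$ gives a ratio of about $(s+3r)/(4r)$. Your bound is also increasing in $s$ for $s>r$. So it drops into the proof of Lemma~\ref{lem:bad-cells}(b) unchanged, giving more than $3\lambda|G\cap rD|$ instead of $2.32\lambda|G\cap rD|$.

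In short, the paper's proof is a one-shot averaging over $\Theta$ that gives up lower-order terms. Yours replaces the integration with a short disjointness computation and gets a slightly sharper inequality.
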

\begin{proof}
Use polar coordinates centered at $v$.  Let $\Theta$ be the set of directions whose ray from $v$ meets~$K$.  For $\theta\in\Theta$, let $R_{\min}(\theta)$ and $R_{\max}(\theta)$ be the infimum and supremum of the corresponding radial section of $K$.  
Since $K\subset rD$,
\[
 R_{\min}(\theta),R_{\max}(\theta)\in[\norm v-r,\norm v+r].
\]
Integration gives
\[
 |K|\le\int_\Theta\frac12(R_{\max}^2(\theta)-R_{\min}^2(\theta))\,d\theta
 \le 2r(\norm v+r)|\Theta|.
\]
For each $\theta\in\Theta$ and each $s<R_{\max}(\theta)$ sufficiently close to $R_{\max}(\theta)$, the corresponding point of the ray lies in $K\subset G$.  Convexity then puts the segment from $v$ to that point inside $G$.  Hence,
\[
 |G|\ge\int_\Theta\frac12R_{\max}^2\,d\theta
 \ge\frac12(\norm v-r)^2|\Theta|.
\]
\end{proof}

\begin{lemma}\label{lem:vertex}
Let $G\subset D$ be a cell with $\overline G\subset int(D)$.  Then $G$ is the interior of a convex polygon.  Each edge of the polygon lies on a boundary line of a plank and $G$ lies on the side away from that plank.  If two edges meeting at a vertex $v$ belong to planks $P_1,P_2$ and the exterior angle is $\phi\in(0,\pi)$ and their center lines meet at a point $c$ then
\begin{equation}\label{eq:vertex-geometry}
 \norm{c-v}=\frac{w/2}{\cos(\phi/2)},\qquad
 \ang(P_1,P_2)=\min\{\phi,\pi-\phi\}.
\end{equation}
\end{lemma}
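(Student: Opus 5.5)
We first establish the polygon structure, then read off the vertex formulas from a computation with two strips.

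\emph{Polygon structure.} Since $\overline G\subset int(D)$, the boundary of $G$ lies in $\bigcup\cQ$, and $\cQ$ is finite. Near any point $x\in\partial G$, the complement of $\bigcup\cQ$ is locally an intersection of finitely many open half-planes, namely the sides of the boundary lines of those planks containing $x$ that face away from the plank. Here $H'$ is open, and by Fact~\ref{fact:cell-convex} the cell $G$ is open and convex. Because $\overline G$ is compact and avoids $\partial D$, $G$ is bounded. Its boundary is covered by finitely many plank boundary lines, so it is a finite union of segments. A bounded open convex set whose boundary is contained in finitely many lines is the interior of a convex polygon.

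\emph{Edges.} Each edge is a nondegenerate segment of $\partial G$ lying in some plank $P$. The open set $G$ is disjoint from $P$. Hence the edge cannot meet the interior of $P$, since interior points of $P$ have neighbourhoods inside $P$, and those neighbourhoods would meet $G$. So the edge lies on a boundary line of $P$, and $G$ lies on the side of that line away from $P$.

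\emph{Vertex geometry.} Consider a vertex $v$ where the edges on boundary lines of $P_1$ and $P_2$ meet, with exterior angle $\phi$. The two edges are not collinear, so the center lines $\ell_1,\ell_2$ are not parallel. The two planks lie on the far sides of these edge lines, so near $v$ the union $P_1\cup P_2$ contains the region exterior to the angle of $G$ at $v$.

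Translate each boundary line inward by $w/2$ to obtain $\ell_1$ and $\ell_2$; their intersection is $c$. The point $c$ lies on the bisector of the angle formed at $v$ by the two boundary lines, on the side containing the planks. That angle, the one opposite to $G$, equals the interior angle $\pi-\phi$ of the polygon. The distance from $c$ to each boundary line is $w/2$. The angle between the bisector and either line is half of $\pi-\phi$, so
\[
\norm{c-v}=\frac{w/2}{\sin((\pi-\phi)/2)}=\frac{w/2}{\cos(\phi/2)}.
\]

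\emph{Angle between the planks.} The center lines are parallel to the edge lines. The lines through the two edges meet at angles $\phi$ and $\pi-\phi$, so the acute angle is $\ang(P_1,P_2)=\min\{\phi,\pi-\phi\}$.

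\emph{Main obstacle.} The only delicate point is orientation: confirming that $c$ sits at the vertex of the inner translated angle, which is the angle opposite $G$. This holds because each plank lies on the side of its boundary line away from $G$, so shifting both lines toward their planks moves their intersection along the bisector into the opposite angle.
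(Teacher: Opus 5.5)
Your proof is correct and follows the paper's argument. The polygon structure comes from convexity together with the fact that no arc of $\partial D$ meets $\partial G$, and $c$ is the intersection of the two edge lines shifted by $w/2$ toward the planks. The paper computes $c=v+\frac w2\frac{n_1+n_2}{1+\cos\phi}$ from the outward normals, whereas you read off $\norm{c-v}$ from the bisector of the vertical angle $\pi-\phi$. You also justify the polygon and edge claims, which the paper leaves implicit.
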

\begin{proof}
Since $\overline G\subset int(D)$, no circular arc of $\partial D$ occurs on the boundary of $G$. Convexity thus gives the polygon statement.  If $n_1,n_2$ are the outward unit normals of the two edges, then $\langle n_1,n_2\rangle=\cos\phi$ and the two center lines are
\[
 \ell_{P_i}=\{x:\langle x-v,n_i\rangle=w/2\}.
\]
Their intersection is
\[
 c=v+\frac w2\frac{n_1+n_2}{1+\cos\phi},
\]
which gives the distance formula.  
\end{proof}

\subsection{The hull of a candidate pair}

Let $P$ be central and $Q$ deep with $\ell_P\cap\ell_Q=\{c\}\subset\rho D$, and set
\[
 \beta=\ang(P,Q)>0.
\]
Choose coordinates $(u,v)$ for the given plank $P$. The line $\ell_P$ is the $u$-axis and the origin is the perpendicular foot from the Euclidean origin to $\ell_P$.  Then,
\[
 \ell_P\cap\rho D=[-a_0,a_0]\times\{0\},
 \qquad a_0=\sqrt{\rho^2-d_P^2}\in[0.997\rho,\rho].
\]
Let $A_1=P\cap\rho D$ and $A_2=Q\cap\rho D$.
  The orthogonal projection of $A_2$ to $\ell_Q$ contains $c$ and has one side of length at least $0.66\rho$.  After reflections we may assume that side points in direction $(\cos\beta,\sin\beta)$ into $\{v>0\}$.

\begin{lemma}\label{lem:hull-upper}
\[
|K(P,Q)|\le2\rho^2\beta+10\rho w.
\]
\end{lemma}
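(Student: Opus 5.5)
The plan is to reduce $K(P,Q)$ to a neighborhood of the convex hull of two crossing segments, then bound the core area and the thickening separately. Let $S_1$ and $S_2$ be the segments of $\ell_P$ and $\ell_Q$ obtained by orthogonally projecting $A_1=P\cap\rho D$ and $A_2=Q\cap\rho D$ onto the respective center lines. Every point of $A_i$ lies within $w/2$ of $S_i$. Hence
\[
K(P,Q)\subset \conv(S_1\cup S_2)+\tfrac w2 D .
\]
Steiner's formula for the convex set $C=\conv(S_1\cup S_2)$ then gives
\[
|K(P,Q)|\le |C|+\tfrac w2\per(C)+\tfrac\pi4 w^2 .
\]

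First I would control the lengths of $S_1$ and $S_2$. Take $x\in\rho D\cap P$ and let $p$ be its projection onto $\ell_P$. Write $p=f+tu$, where $f$ is the foot of the perpendicular from $0$ and $u$ is the unit direction of $\ell_P$. Since $x-p$ is normal to $\ell_P$, we have $\langle p,x-p\rangle=\langle f,x-p\rangle$, so
\[
\norm p^2=\norm x^2-\norm{x-p}^2-2\langle f,x-p\rangle\le \rho^2+d_P w .
\]
Using $d_P\le 0.07\rho$ for central $P$, and $d_Q\le 0.75\rho$ for deep $Q$, this gives $\norm p\le \rho+0.4w$. Therefore both $S_1$ and $S_2$ have length at most $L:=2\rho+0.8w$.

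Next I would bound the core. Both segments pass through $c$: indeed $c\in\rho D$ lies on both center lines, so it projects to itself. Consequently $C$ is a (possibly degenerate) quadrilateral whose diagonals are $S_1$ and $S_2$, meeting at angle $\beta$. This gives
\[
|C|=\tfrac12|S_1||S_2|\sin\beta\le \tfrac12L^2\beta=2\rho^2\beta+1.6\rho w\beta+O(w^2\beta),
\]
and with $\beta\le\pi/2$ the cross term is at most about $2.6\rho w$. For the thickening, each side of the quadrilateral is at most the sum of two half-diagonals, so $\per(C)\le 2(|S_1|+|S_2|)\le 4L$. Thus the middle Steiner term is at most $2Lw\approx4\rho w+1.6w^2$.

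Finally I would collect terms, absorbing the $w^2$ contributions using $w\le r/100\le\rho/40000$ from \eqref{eq:pruning-assumptions}. The total is $2\rho^2\beta+c\rho w$ with $c$ roughly $7$, below the claimed constant $10$. The only delicate point is the length estimate for the projections. The naive bound $\norm p\le\rho$ is false, since projecting onto a line can increase the norm. The identity above, together with $d_P,d_Q<\rho$, shows that the increase is only $O(w)$, which is exactly what the additive $10\rho w$ term absorbs. The rest is bookkeeping of constants.
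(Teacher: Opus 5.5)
Your proof is correct and is essentially the paper's argument: enclose $K(P,Q)$ in a thickening of the convex hull of two segments crossing at $c$, bound that hull's area by $\tfrac12|S_1||S_2|\sin\beta$ and its perimeter by $2(|S_1|+|S_2|)$, and apply Steiner's formula. The only difference is the choice of core. The paper uses the chords $\ell_P\cap\rho D$, $\ell_Q\cap\rho D$ with a $1.1w$ thickening, and asserts $K(P,Q)\subset K_0+B(0,1.1w)$ without further justification; you use the projected segments of length at most $2\rho+0.8w$ with a $w/2$ thickening, and your bound $\norm p^2\le\rho^2+d_Pw$ makes that containment step explicit.
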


\begin{proof}
Let
\[
K_0=\operatorname{conv}\bigl((\ell_P\cap \rho D)\cup(\ell_Q\cap \rho D)\bigr).
\]
Since \(P,Q\) have width \(w\) and their center lines cross \(\rho D\),
\[
K(P,Q)\subset K_0+B(0,1.1w).
\]
The two chords have lengths at most \(2\rho\) and meet at angle \(\beta\), so
\[
|K_0|\le 2\rho^2\sin\beta\le 2\rho^2\beta,
\qquad
\operatorname{per}(K_0)\le 8\rho.
\]
By Steiner's formula,
\[
|K(P,Q)|
\le |K_0|+1.1w\,\operatorname{per}(K_0)+\pi(1.1w)^2
\le 2 \rho^2\beta+10\rho w.
\]
\end{proof}

\begin{lemma}\label{lem:hull-lower}
In the above coordinates,
\[
 K(P,Q) \supset
 \left\{(u,v):|u|\le0.6\rho,\ 0\le v\le0.1254 \rho\sin\beta \right\}.
\]
\end{lemma}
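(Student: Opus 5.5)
The plan is to inscribe an explicit triangle in $K(P,Q)$ and read off the rectangle from it, using only convexity of $K(P,Q)$. The triangle has as its base the central chord of $P$ and as its apex a point of $A_2=Q\cap\rho D$ lying far out along $\ell_Q$.

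First, the base. Since $\ell_P\subset P$, the chord $[-a_0,a_0]\times\{0\}$ lies in $A_1\subset K(P,Q)$, and $a_0\ge0.997\rho$.

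Second, the apex. By the normalization preceding Lemma~\ref{lem:hull-upper}, the projection of $A_2$ onto $\ell_Q$ contains a point $p=c+s(\cos\beta,\sin\beta)$ with $s\ge0.66\rho$, up to taking a supremum. Hence there is $q\in A_2$ with $q=p+t\,n_Q$, where $|t|\le w/2$ and $n_Q$ is a unit normal to $\ell_Q$. Then $q\in\rho D$, so $|q_u|\le\rho$. Since $c$ lies on the $u$-axis, its $v$-coordinate is $0$, and therefore
\[
 q_v\ge 0.66\rho\sin\beta-\tfrac w2 .
\]
By \eqref{eq:angle-thin-basic} we have $w\le\rho\beta/50$, and $\beta\le\pi/2$ gives $\sin\beta\ge 2\beta/\pi$. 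Hence $w/2\le \tfrac{\pi}{200}\rho\sin\beta$, and so $q_v\ge h:=0.644\rho\sin\beta$. If the supremum in the projection is not attained, I would use a limiting sequence, or a slightly shorter $s$; the constants have room for this. I expect this step to be the main obstacle: one has to pass from the projected side length $0.66\rho$ to an actual point of $K$ with controlled height, and this uses only the width bound \eqref{eq:angle-thin-basic}.

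Third, the slice computation. By convexity, $T=\conv(\{(\pm a_0,0)\},q)\subset K(P,Q)$. For $0\le v\le q_v$, put $\lambda=v/q_v$. The horizontal slice of $T$ at height $v$ is the interval
\[
 \bigl[-a_0+\lambda(q_u+a_0),\ a_0-\lambda(a_0-q_u)\bigr].
\]
Since $|q_u|\le\rho$, both $q_u+a_0$ and $a_0-q_u$ are at most $2\rho$. So the slice contains $[-0.6\rho,0.6\rho]$ as soon as $2\rho\lambda\le a_0-0.6\rho$. Because $a_0-0.6\rho\ge0.397\rho$, it suffices that $\lambda\le0.1985$.

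For $v\le0.1254\rho\sin\beta$ we have
\[
 \lambda\le \frac{0.1254}{0.644}<0.195,
\]
so every such slice contains $[-0.6\rho,0.6\rho]$. Taking the union over $0\le v\le 0.1254\rho\sin\beta$ gives the claimed rectangle inside $T\subset K(P,Q)$. All that remains is to double-check these numerical margins.
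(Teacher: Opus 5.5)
Your proof is correct and is essentially the paper's argument: inscribe in $K(P,Q)$ the triangle with base $[-a_0,a_0]\times\{0\}$ and a far apex taken from $A_2$, then check that over $|u|\le0.6\rho$ its height is at least about $0.19$ times the apex height. The only difference is that the paper takes the apex to be the far endpoint $E$ of the chord $\ell_Q\cap\rho D$, which lies on $\ell_Q$ itself, so $v_E\ge0.66\rho\sin\beta$ holds directly and $0.19\cdot0.66=0.1254$; your $w/2$ correction via \eqref{eq:angle-thin-basic}, the step you flagged as the main obstacle, is therefore unnecessary.
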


\begin{proof}
Let \(E=(u_E,v_E)\) be the endpoint of \(\ell_Q\cap \rho D\) furthest away from \(c\). By choice of coordinates, 
$v_E\ge 0.66\rho\sin\beta$ and
$|u_E|\le \rho$.
The triangle with vertices \((-a_0,0),(a_0,0)\), and \(E\) is contained
in \(K\). Since \(a_0\ge 0.997\rho\), for every \(|u|\le 0.6\rho\) its
height is at least
\[
v_E\frac{0.397\rho}{1.997\rho}>0.19v_E.
\]
\end{proof}

\subsection{A band attached to a surviving plank}

Fix a final family $\cQ$ and write $H'=D\setminus\bigcup\cQ$.  For $P\in\cQ$, define
\[
 I_P=\{Q\in\cQ: Q\text{ is deep and }\ell_Q\cap\ell_P\text{ is one point in }\rho D\}.
\]
When $I_P\ne\varnothing$, set
\[
 \alpha_P=\max_{Q\in I_P}\ang(P,Q),\qquad h_P=0.07\rho\alpha_P.
\]

\begin{definition}
With the coordinates above, define the band
\[
 \Sigma_P=\{(u,v):|u|\le0.6\rho,\ 0\le v\le h_P\}.
\]
\end{definition}

\begin{lemma}\label{lem:surviving-band}
Let $P\in\cQ$ be central with $I_P\ne\varnothing$, and choose $Q\in I_P$ with $\ang(P,Q)=\alpha:=\alpha_P$.  
Then,
\[
 |\Sigma_P\setminus H'|\le0.004|\Sigma_P|,
 \qquad |\Sigma_P|=0.084\rho^2\alpha.
\]
\end{lemma}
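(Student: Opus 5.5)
The identity $|\Sigma_P|=0.084\rho^2\alpha$ is immediate, since $\Sigma_P$ is a $1.2\rho\times0.07\rho\alpha$ rectangle. For the covering estimate, the plan is to use finality of $\cQ$ directly on the pair $(P,Q)$. The planks of $\cQ$ meeting $\Sigma_P$ fall into two classes: almost parallel ones, which are controlled by non-removability, and all others, which cannot meet $\Sigma_P$ at all because of the maximality of $\alpha$.

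\emph{Step 1: $(P,Q)$ is a non-removable candidate pair and $\Sigma_P\subset K(P,Q)$.} By definition of $I_P$, the pair $(P,Q)$ is a candidate pair in $\cQ$. Since $\cQ$ is final, it is not removable, so
\[
 \Bigl|K(P,Q)\cap\bigcup N(P,Q)\Bigr|<\tau|K(P,Q)|\le\tau(2\rho^2\alpha+10\rho w)
\]
by Lemma~\ref{lem:hull-upper}. Choose the coordinates (and the reflection) for $P$ with respect to this maximizing $Q$, as in the setup before Lemma~\ref{lem:hull-lower}. Because $\alpha\le\pi/2$ gives $\sin\alpha\ge(2/\pi)\alpha$, we have $0.1254\rho\sin\alpha\ge0.0798\rho\alpha>h_P$. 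Lemma~\ref{lem:hull-lower} then gives $\Sigma_P\subset K(P,Q)$. Also $\Sigma_P\subset\rho D\subset\operatorname{int}(D)$, so $\Sigma_P\setminus H'\subset\Sigma_P\cap\bigcup\cQ$.

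\emph{Step 2 (main step): a plank of $\cQ$ that is not almost parallel misses $\Sigma_P$.} Let $R\in\cQ\setminus N(P,Q)$; note $\cQ\subset\cP$ and $P\in N(P,Q)$. Then $\theta:=\ang(R,P)\ge2\alpha>0$, so $R\ne P$ and $R$ is not parallel to $P$. Suppose $R$ meets $\Sigma_P$ at a point $(u,v)$. Then $\ell_R$ passes within $w/2$ of $(u,v)$, where $|u|\le0.6\rho$ and $0\le v\le0.07\rho\alpha$.
\begin{itemize}
\item \emph{$\ell_R$ crosses $\ell_P$ inside $\rho D$.} Using $\sin\theta\ge\frac{2}{\pi}\theta\ge\frac4\pi\alpha$ together with \eqref{eq:angle-thin-basic} (which gives $w\le\rho\alpha/50$), $\ell_R$ meets $\ell_P$ at some $u'$ with
\[
|u'-u|\le\frac{h_P+w}{\sin\theta}\le\frac{\pi(0.07+0.02)\rho}{4}<0.08\rho .
\]
Hence $|u'|\le0.68\rho$. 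Since $d_P\le0.07\rho$, the intersection point lies in $\rho D$.
\item \emph{$R$ is deep.} We have $d_R\le\norm{(u,v)}+w\le\sqrt{(0.6\rho)^2+(0.07\rho+0.11\rho)^2}+w<0.75\rho$.
\end{itemize}
Thus $R\in I_P$, so $\ang(P,R)\le\alpha_P=\alpha<2\alpha$, a contradiction. The work here is only in checking that the numerical constants close.

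\emph{Step 3: conclude.} By Step 2, $\Sigma_P\cap\bigcup\cQ\subset K(P,Q)\cap\bigcup N(P,Q)$. Combining with Step 1 and $10\rho w\le\rho^2\alpha/5$,
\[
|\Sigma_P\setminus H'|<\tau\cdot2.2\rho^2\alpha=2.2\cdot10^{-4}\rho^2\alpha<3.36\cdot10^{-4}\rho^2\alpha=0.004|\Sigma_P| .
\]
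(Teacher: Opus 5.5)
Your proof is correct and follows the paper's argument step for step: non-removability of the maximizing pair $(P,Q)$ together with $\Sigma_P\subset K(P,Q)$ from Lemma~\ref{lem:hull-lower}, then maximality of $\alpha_P$ forcing every plank of $\cQ$ that meets $\Sigma_P$ into $N(P,Q)$, then the hull upper bound of Lemma~\ref{lem:hull-upper}. Your constant $2.2\tau$ is the one that \eqref{eq:angle-thin-basic} (i.e.\ $w\le\rho\alpha/50$) actually justifies, whereas the paper writes $2.1\tau$; both are well below $0.004\cdot0.084=3.36\cdot10^{-4}$.
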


\begin{proof}
Since \((P,Q)\) is a candidate pair and \(\cQ\) is final, it is not removable.
By Lemma~\ref{lem:hull-lower} and \(\sin\alpha\ge 2\alpha/\pi\),
\[
\Sigma_P\subset K(P,Q).
\]

We next claim that every \(R\in\cQ\) meeting \(\Sigma_P\) belongs to
\(N(P,Q)\). Indeed, let \(x=(u,v)\in R\cap\Sigma_P\).
Because \(x \in\Sigma_P\), 
\[
|u|\le 0.6\rho,
\qquad
0\le v\le h_P=0.07\rho\alpha <0.11\rho.
\]
Because \(P\) is central, \(d_P\le 0.07\rho\).
Hence,
\[
\|x\|+\frac w2<\rho\sqrt{0.6^2+0.18^2} + \frac{w}{2} <0.64\rho.
\]
so $R$ is deep.
It remains (in fact suffices) to prove that
$\gamma:= \ang(R,P) < 2 \alpha$. Otherwise,
the intersection of \(\ell_R\) with \(\ell_P\) has \(u\)-coordinate \(u_0\)
satisfying
\[
|u_0|
\le 0.6\rho+\frac{0.07\rho\alpha+w/2}{\alpha}+w/2
<0.997\rho ,
\]
so \(R\in I_P\), contradicting
\(\gamma>\alpha=\alpha_P\). 

Because $(P,Q)$ is not removable and $H'=int(D)\setminus\bigcup\cQ$,
every point in 
\[\Sigma_P \setminus H'
\subset K(P,Q) .\]
It follows that
\[\Sigma_P\setminus H'
= \Sigma_P\cap \bigcup \cQ
\subset K(P,Q)
\cap \bigcup N(P,Q).\]
Hence,
\[
|\Sigma_P\setminus H'|
\le \tau |K(P,Q)|
\le \tau(2\rho^2\alpha+10\rho w)
\le 2.1\tau\rho^2\alpha.
\]
Finally,
\[
|\Sigma_P|=(1.2\rho)(0.07\rho\alpha)
=0.084\rho^2\alpha.
\]
\end{proof}

\subsection{Bad cells}

Use the parameters:
\begin{equation}\label{eq:lambda}
 \lambda=\frac{\rho}{200r}\ge1,
 \qquad R_0=12\lambda r=0.06\rho.
\end{equation}

\begin{definition}
Let $\mathcal G$ be the cells of $H'$ meeting $rD$.  Call $G\in\mathcal G$ \emph{good} if
$|G|\ge2\lambda|G\cap rD|$, and \emph{bad} otherwise.    
\end{definition}

\begin{lemma}[Structure of bad cells]\label{lem:bad-cells}
Let $\cQ$ be a final family of planks. Assume that
\begin{equation}\label{eq:contradiction-hole}
 |H'|<\lambda|H|.
\end{equation}
Then,
\begin{enumerate}[label=(\alph*)]
\item
\[
 \sum_{G\ \mathrm{bad}}|G\cap rD|>\frac{|H'|}{2\lambda}.
\]
\item Every bad cell lies in $0.06\rho D$, and every plank carrying one of its edges is central.
\item Every central $P\in\cQ$ with $I_P\ne\varnothing$ satisfies
\[
 \alpha_P\le\alpha_*:=0.19\frac r\rho\le10^{-3}.
\]
\item Any two central retained planks make angle at most $0.3$.
\item Every bad cell $G$ has a bounding plank $P(G)$, central with $I_{P(G)}\ne\varnothing$, such that
\[
 G\subset\{x:\dist(x,\ell_{P(G)}) < 2 h_{P(G)}\}.
\]
\end{enumerate}
\end{lemma}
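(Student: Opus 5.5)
I will prove the five items in order. Items (a), (b) and (e) are elementary geometry built on Lemmas~\ref{lem:convex-growth} and~\ref{lem:vertex}. Item (c) combines finality with Lemma~\ref{lem:surviving-band}. Item (d) is a short angle computation at the origin.

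\emph{Item (a).} Every point of $H\subset rD$ is uncovered by $\cP\supseteq\cQ$. Up to a null set, $H$ is therefore contained in $H'\cap rD=\bigcup_{G\in\mathcal G}(G\cap rD)$. Each good cell satisfies $|G\cap rD|\le|G|/(2\lambda)$. Summing over good cells gives at most $|H'|/(2\lambda)$, so
\[
\sum_{G\text{ bad}}|G\cap rD|\ \ge\ |H|-\frac{|H'|}{2\lambda}\ >\ \frac{|H'|}{\lambda}-\frac{|H'|}{2\lambda}=\frac{|H'|}{2\lambda},
\]
where the middle inequality uses \eqref{eq:contradiction-hole}.

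\emph{Item (b).} Let $G$ be a bad cell. If $G$ contained a point $v$ with $\norm v\ge R_0=12\lambda r$, Lemma~\ref{lem:convex-growth} would give
\[
|G|\ge\frac{(\norm v-r)^2}{4r(\norm v+r)}|G\cap rD|\ge\frac{(11\lambda r)^2}{4r\cdot13\lambda r}|G\cap rD|>2\lambda|G\cap rD|,
\]
so $G$ would be good. Hence $G\subset R_0D=0.06\rho D$. In particular $\overline G\subset int(D)$, so Lemma~\ref{lem:vertex} makes $G$ a polygon whose edges lie on plank boundary lines. A plank carrying an edge contains a boundary point within $0.06\rho$ of the origin. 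Its center line is then within $0.06\rho+w/2<0.07\rho$ of the origin, using $w\le r/100$ and $r\le\rho/400$. So every such plank is central.

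\emph{Item (c).} Take a central $P$ with $I_P\ne\varnothing$. Lemma~\ref{lem:surviving-band} says $\Sigma_P$ is $99.6\%$ uncovered. I will show that this forces a large uncovered convex region that meets $rD$ but reaches far, which makes $H'$ too large.
\begin{itemize}
\item The band $\Sigma_P$ has length $1.2\rho$ along $\ell_P$ and height $h_P$.
\item Since the hole $H$ lies inside $rD$ and is nonempty, I expect to show that an uncovered cell of $\Sigma_P$ must reach into $rD$.
\item Comparing with \eqref{eq:contradiction-hole} and Lemma~\ref{lem:convex-growth} then gives $h_P\lesssim r$, i.e. $0.07\rho\alpha_P\lesssim r$.
\end{itemize}
This is the step I expect to be the main obstacle: a large, mostly empty band must be turned into an area bound against $|H|$. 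The cleanest route I can see is to first prove (e), then use that some bad cell lies near $\ell_P$. Convexity of cells in the uncovered band then lets a cell meeting $rD$ grow to length $\sim\rho$, contradicting badness unless $h_P\le 2.7r$. That yields $\alpha_P\le0.19r/\rho$, and $\le10^{-3}$ follows from $r\le\rho/400$.

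\emph{Item (d).} Two central retained planks $P,P'$ have center lines within $0.07\rho$ of the origin. If $\ang(P,P')>0.3$, their center lines cross at a point of norm at most $0.07\rho/\sin(0.15)<0.5\rho$, which lies in $\rho D$. Central planks are deep, so $P'\in I_P$ and $\alpha_P\ge0.3$, contradicting (c).

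\emph{Item (e).} Take a bad cell $G\subset0.06\rho D$. As a bounded convex polygon it has a vertex $v$ where two edge planks $P_1,P_2$ meet. Both are central by (b).
\begin{itemize}
\item By Lemma~\ref{lem:vertex}, their center lines meet at a point $c$ with $\norm{c-v}=\frac{w/2}{\cos(\phi/2)}$. By \eqref{eq:angle-thin-basic} and (d) this distance is small, so $c\in\rho D$.
\item Hence $P_2\in I_{P_1}$, so $I_{P_1}\ne\varnothing$ and $h_{P_1}\ge0.07\rho\ang(P_1,P_2)$.
\item I choose $P(G)$ to be an edge plank whose $\alpha_P$ is maximal among the edges.
\item All edge lines of $G$ lie within angle $\alpha_{P(G)}\le\alpha_*$ of $\ell_{P(G)}$, and $G$ has diameter at most $0.12\rho$.
\end{itemize}
So $G$ is a thin polygon hugging $\ell_{P(G)}$. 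Its distance from $\ell_{P(G)}$ is at most about $0.12\rho\,\alpha_{P(G)}+w<2h_{P(G)}$, after checking the orientation with respect to the $v>0$ side and using $w\le\frac\rho{50}\alpha$.
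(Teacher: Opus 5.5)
There is a genuine gap in (c), and (e) rests on an unproved claim. Items (a), (b) and (d) are fine. In (b), note that $t\mapsto(t-r)^2/(4r(t+r))$ is increasing, since $\norm v+r$ sits in the denominator.

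\textbf{Item (c).} The route you sketch is circular. You want to derive (c) from (e), but your (e) uses $\alpha_{P(G)}\le\alpha_*$ and invokes (d), and (d) is deduced from (c). The route also fails on its own terms. The side $v>0$ of $\Sigma_P$ is fixed by the direction of $Q$, not by the position of the origin. Moreover $d_P$ may be of order $0.07\rho\gg r$, so $\Sigma_P$ can lie entirely on the far side of $\ell_P$ from $rD$. Its cells then need not meet $rD$, and neither the good/bad dichotomy nor Lemma~\ref{lem:convex-growth} says anything about them. There is also no reason a bad cell should lie near $\ell_P$ for an arbitrary central $P$.

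The missing observation is that \eqref{eq:contradiction-hole} is a \emph{global} upper bound on $|H'|$, so no local geometry is needed. Lemma~\ref{lem:surviving-band} gives $|H'|\ge|\Sigma_P\cap H'|\ge0.996\cdot0.084\rho^2\alpha_P$. On the other hand, $|H'|<\lambda|H|\le\lambda\pi r^2=\pi\rho r/200$. Comparing the two gives $\alpha_P<0.188\,r/\rho$. With (c) proved this way, your (d) goes through.

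\textbf{Item (e).} The sentence ``all edge lines of $G$ lie within angle $\alpha_{P(G)}$ of $\ell_{P(G)}$'' carries the whole argument and is not justified. Maximizing $\alpha$ over edges only controls edges whose center lines cross $\ell_{P(G)}$ inside $\rho D$. You know this for adjacent edges, but not for non-adjacent, nearly parallel ones. Nothing you wrote rules out a cell whose many small turns accumulate, such as a near-regular polygon.

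What is missing is the global turning argument.
\begin{enumerate}
\item By Lemma~\ref{lem:vertex} and (c), every exterior angle $\phi$ satisfies $\min\{\phi,\pi-\phi\}\le\alpha_*$.
\item Exterior angles sum to $2\pi$, so at most two vertices (tips) have $\phi\ge\pi-\alpha_*$.
\item Suppose there were at most one tip. Then the small turns total more than $\pi$. Accumulate them from a fixed edge until the total first reaches $\pi/2$. This produces two edge planks, central by (b), at an angle in $[\pi/2-\alpha_*,\pi/2]$, contradicting (d).
\end{enumerate}
So $G$ has exactly two tips, with interior angles $\theta_1\le\theta_2\le\alpha_*$.

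Now take $P(G)$ to be one of the two planks at the larger tip. $G$ lies in a wedge of angle $\theta_2$ at that tip and has diameter at most $0.12\rho$. Hence every point of $G$ is within $w/2+0.12\rho\theta_2\le0.13\rho\theta_2$ of $\ell_{P(G)}$, using \eqref{eq:angle-thin-basic}. Also $\alpha_{P(G)}\ge\theta_2$, because the two tip center lines cross within about $w/\theta_2\le\rho/50$ of the tip, hence inside $\rho D$. Together these give $\dist(x,\ell_{P(G)})<2h_{P(G)}$, without any claim about the directions of all edges.
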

\begin{proof}
For (a), because $H\subset H'\cap rD$,
\[
 |H|\le\sum_{G\in\mathcal G}|G\cap rD|.
\]
On the other hand,
\[
 |H'|\ge\sum_{G\in\mathcal G}|G|
 \ge2\lambda\sum_{G\ \mathrm{good}}|G\cap rD|.
\]
By~\eqref{eq:contradiction-hole},
\[\sum_{G\ \mathrm{bad}}|G\cap rD|\geq |H| - \frac{|H'|}{2\lambda}\geq \frac{|H'|}{2\lambda} .\]

For (b), by Lemma \ref{lem:convex-growth},
if a cell $G$ contains $v$ with $\norm v\ge12\lambda r$, then
\[
 |G|\ge\frac{(11\lambda r)^2}{4r(13\lambda r)}|G\cap rD|
 \ge2.32\lambda|G\cap rD|,
\]
because the function $t\mapsto (t-r)^2/(4r(t+r))$ is increasing for $t>r$. 
The cell $G$ is bad, so $G\subset0.06\rho D$.  In particular, Lemma \ref{lem:vertex} applies to $G$. Hence, any plank $P$ carrying an edge of $G$ has $d_P \leq 0.06\rho+w/2\le0.07\rho$.

For (c), Lemma \ref{lem:surviving-band} gives
\[
 |H'|\ge |H' \cap \Sigma_P| \geq 0.997\cdot0.084\rho^2\alpha_P,
\]
whereas by~\eqref{eq:contradiction-hole} ,
\[
 |H'|<\lambda|H|\le\lambda\pi r^2=\frac{\pi\rho r}{200}.
\]

For (d), assume towards a contradiction that two central planks made an angle $\mu>0.3$. Their center lines meet within
$\frac{0.14\rho}{\sin0.3}<0.5\rho$
of the origin. So, they lie in each other's $I$-sets. Now, (c) implies
$\mu\le\alpha_*<0.3$, a contradiction.

We now claim that every bad cell has exactly two vertices whose exterior angle is at least $\pi-\alpha_*$ and all other exterior angles are at most $\alpha_*$; call the two exceptional vertices the \emph{tips}. Let $v$ be a vertex of a bad cell and $\phi$ its exterior angle.  By (b), $\norm v\le0.06\rho$, and the incident planks are central.  If
$\cos(\phi/2)\ge w/(1.8\rho)$, Lemma \ref{lem:vertex} puts the intersection of their center lines within $0.96\rho$ of the origin, so
\[
 \min\{\phi,\pi-\phi\}\le\alpha_*.
\]
Otherwise, 
\[
 \phi>
 \pi-2.01 w/(1.8\rho)
 \ge\pi-\alpha_*.
\]
In both cases, every vertex is either a tip or has exterior angle at most $\alpha_*$.  Three tips would have total exterior angle greater than $2\pi$, so there are at most two.  Suppose instead that there is at most one tip.  Then the non-tip exterior angles, each at most $\alpha_*$, would have total sum greater than $\pi$.  Traverse the polygon and accumulate these turning angles from one fixed edge. At the first time the accumulated turn reaches $\pi/2$, it lies in $[\pi/2,\pi/2+\alpha_*]$.  The corresponding two unoriented edge lines therefore make an angle in $[\pi/2-\alpha_*,\pi/2]$.  Their carrying planks are central by (b), contradicting (d).  Hence every bad cell has exactly two tips.

For (e), let the tip interior angles be $0<\theta_1\le\theta_2\le\alpha_*$.  Since $\diam G\le0.12\rho$, every point of $G$ lies within $0.12\rho\theta_2$ of either edge line at the larger tip.   
The two corresponding center lines meet at distance at most $1.001w/\theta_2\le0.501\rho$ from the tip, hence inside $\rho D$.  Choose one of the two planks as $P(G)$.
It satisfies  $I_{P(G)}\ne\varnothing$ and $\alpha_{P(G)}\ge\theta_2$.    Therefore, for $x \in G$,
\[
 \dist(x,\ell_{P(G)})
 \le w/2+0.12\rho\theta_2
 \le0.13\rho\alpha_{P(G)}<2 h_{P(G)}.
\]
\end{proof}

\subsection{Selecting disjoint carrier bands}

Let
\[
 \bad=\{P(G):G\text{ bad}\} ,
\]
where $P(G)$ is defined in Lemma \ref{lem:bad-cells}(e). All members of $\bad$ are central by (b) and any two make angle at most $0.3$ by (d). Rotate coordinates so one is horizontal and write for each $P \in \bad$,
\[
 \ell_P=\{(x,y_P(x))\},\qquad y_P(x)=a_Px+b_P,
\]
with
\[
 |a_P|\le0.31,\qquad |b_P|\le0.08\rho.
\]
Define the non-local bands
\[
 B_P^M=\{(x,y):|x|\le0.5\rho,\ |y-y_P(x)|\le Mh_P\},
 \qquad B_P=B_P^{1.05}.
\]
Truncate the plank band $\Sigma_P$ to be
\[
 \Sigma_P'=\Sigma_P\cap\{\norm x\le0.5\rho\}.
\]
\begin{claim}
\label{clm:SandB}
For $P \in \bad$, we have
$\Sigma_P'\subset B_P$
and
$|\Sigma_P'|\ge0.98\rho h_P$.
\end{claim}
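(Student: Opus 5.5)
The plan is to work entirely in the band coordinates $(u,v)$ attached to $P$: the $u$-axis is $\ell_P$ and the $v$-axis passes through the Euclidean origin, so that the origin sits at $(0,\pm d_P)$ with $d_P\le0.07\rho$. Then $\Sigma_P$ is the rectangle $[-0.6\rho,0.6\rho]\times[0,h_P]$, of height $h_P=0.07\rho\alpha_P\le 0.07\cdot 10^{-3}\rho$ by Lemma~\ref{lem:bad-cells}(c). The rotated global coordinates $(x,y)$ differ from $(u,v)$ by a rigid motion whose rotation part has angle $\theta_P$ with $|\tan\theta_P|=|a_P|\le0.31$. In the $(x,y)$ frame, $\ell_P$ is the graph of $y_P$, and the signed distance to $\ell_P$ is $v=(y-y_P(x))\cos\theta_P$.

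\emph{Inclusion.} Take $z\in\Sigma_P'$, so $0\le v\le h_P$ and $\norm z\le0.5\rho$.
\begin{itemize}
\item Since $\norm z\le 0.5\rho$, the global $x$-coordinate of $z$ satisfies $|x|\le\norm z\le0.5\rho$.
\item The vertical distance from $z$ to the graph is $|y-y_P(x)|=v/\cos\theta_P$.
\item This would exceed $h_P$, because $1/\cos\theta_P=\sqrt{1+a_P^2}\le\sqrt{1+0.31^2}\approx1.047$.
\item It is nevertheless at most $1.05h_P$, which is exactly why $B_P=B_P^{1.05}$ was chosen.
\end{itemize}
Hence $z\in B_P$. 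This step is just a matter of checking that the constant $1.05$ absorbs the slope bound $0.31$; since $1.047<1.05$, it does.

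\emph{Area.} I will show that $\Sigma_P'$ contains the rectangle $[-0.49\rho,0.49\rho]\times[0,h_P]$ in $(u,v)$ coordinates, whose area is $0.98\rho h_P$.
\begin{itemize}
\item A point $(u,v)$ in this rectangle lies at Euclidean distance from the origin at most $\sqrt{u^2+(v+d_P)^2}$.
\item Using $v\le h_P\le 10^{-4}\rho$ and $d_P\le0.07\rho$, this is at most $\rho\sqrt{0.49^2+0.0701^2}\approx0.495\rho<0.5\rho$.
\item The constraint $|u|\le0.49\rho\le0.6\rho$ is also met.
\end{itemize}
So the rectangle lies in $\Sigma_P\cap\{\norm x\le0.5\rho\}=\Sigma_P'$, and $|\Sigma_P'|\ge 0.98\rho h_P$.

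The only delicate point is this last numerical check: $0.49^2+0.0701^2\approx0.2450<0.25$. This holds because $h_P$ is tiny compared with $\rho$, which follows from $\alpha_P\le 10^{-3}$ in Lemma~\ref{lem:bad-cells}(c); this is precisely the step where the assumption $P\in\bad$ (central with $I_P\neq\varnothing$) enters. Margins are thin but positive, so I would simply verify the two inequalities explicitly.
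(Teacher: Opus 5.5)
Your proof is correct and follows the paper's argument essentially verbatim. The inclusion comes from converting the perpendicular distance $v\le h_P$ to $\ell_P$ into a vertical distance via the slope bound ($1/\cos\theta_P\le\sqrt{1+0.31^2}<1.05$), and the area bound from checking that the rectangle $\{|u|\le0.49\rho,\ 0\le v\le h_P\}$ lies in $0.5\rho D$ using $d_P\le0.07\rho$ and $h_P<10^{-4}\rho$; your explicit check that $|x|\le\norm z\le0.5\rho$, needed for membership in $B_P$, is a detail the paper leaves implicit.
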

\begin{proof}
Every point of $\Sigma_P$ is at Euclidean distance at most $h_P$ from $\ell_P$.  Since $\ell_P$ has an angle at most $0.3$ with the $x$-axis, its vertical distance from the graph $y_P$ is at most $h_P/\cos0.3<1.05h_P$.  Hence $\Sigma_P'\subset B_P$.  Moreover, in the local $(u,v)$ coordinates the Euclidean origin is at $(0,\pm d_P)$. The rectangle $\{|u|\le0.49\rho , 0\le v\le h_P\}$ lies in $0.5\rho D$ because $d_P\le0.07\rho$ and $h_P
\le 0.07\cdot 10^{-3}\rho
<10^{-4}\rho$.
\end{proof}
\begin{lemma}\label{lem:carrier-lines}
For every $P,Q\in\bad$,
\begin{enumerate}[label=(\arabic*)]
\item If $\ell_P,\ell_Q$ meet at an abscissa $|x|\le0.85\rho$, then
\[
 0.5\rho|a_P-a_Q|\le7.9\min\{h_P,h_Q\}.
\]
\item Otherwise $|b_P-b_Q|\ge0.85\rho|a_P-a_Q|$, and for $|x|\le0.5\rho$,
\[
 0.41|b_P-b_Q|\le|y_P(x)-y_Q(x)|\le1.59|b_P-b_Q|.
\]
\end{enumerate}
\end{lemma}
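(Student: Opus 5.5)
The proof is a direct computation in the graph coordinates fixed before Claim~\ref{clm:SandB}. The only input from the pruning structure is Lemma~\ref{lem:bad-cells}(c): two central retained planks whose center lines cross inside $\rho D$ lie in each other's $I$-sets. That bounds their angle by both $\alpha_P$ and $\alpha_Q$, and so by $h_P$ and $h_Q$. Write $\Delta a=a_P-a_Q$ and $\Delta b=b_P-b_Q$. If $\Delta a=0$ the lines are parallel; then (1) is vacuous and (2) holds trivially, since $y_P-y_Q\equiv\Delta b$. So assume $\Delta a\ne0$, and let the lines meet at the abscissa $x_0=-\Delta b/\Delta a$.

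\emph{Part (1).} Suppose $|x_0|\le0.85\rho$.
\begin{itemize}
\item The ordinate of the crossing satisfies $|y|\le|a_P||x_0|+|b_P|\le0.31\cdot0.85\rho+0.08\rho<0.35\rho$. Hence the crossing lies within $\sqrt{0.85^2+0.35^2}\,\rho<0.92\rho$ of the origin, so inside $\rho D$.
\item Both planks are central, hence deep, so $Q\in I_P$ and $P\in I_Q$. Therefore $\ang(P,Q)\le\min\{\alpha_P,\alpha_Q\}=\min\{h_P,h_Q\}/(0.07\rho)$.
\item To convert angles into slopes, write $a=\tan\theta$ with $|a|\le0.31$. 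The mean value theorem gives $|\Delta a|\le\sup(1+a^2)\,|\theta_P-\theta_Q|\le1.0961\,\ang(P,Q)$. Since $|\theta_P-\theta_Q|\le0.6<\pi/2$, the difference $|\theta_P-\theta_Q|$ equals the acute angle $\ang(P,Q)$.
\item Combining the last two bullets gives $0.5\rho|\Delta a|\le 0.5\cdot1.0961/0.07\cdot\min\{h_P,h_Q\}\approx7.83\min\{h_P,h_Q\}\le7.9\min\{h_P,h_Q\}$.
\end{itemize}

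\emph{Part (2).} Suppose $|x_0|>0.85\rho$.
\begin{itemize}
\item From $x_0=-\Delta b/\Delta a$ we get $|\Delta b|=|x_0||\Delta a|\ge0.85\rho|\Delta a|$.
\item For $|x|\le0.5\rho$, $y_P(x)-y_Q(x)=\Delta a\,x+\Delta b$.
\item The perturbation satisfies $|\Delta a\,x|\le0.5\rho|\Delta a|\le(0.5/0.85)|\Delta b|<0.589|\Delta b|$.
\item The triangle inequality then gives $0.41|\Delta b|\le|y_P(x)-y_Q(x)|\le1.59|\Delta b|$.
\end{itemize}

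There is no real obstacle in this argument. The only point needing care is part (1): one must check that the crossing point really lies in $\rho D$, so that the $I$-set relation holds, and that the tangent-slope conversion factor is small enough to give the stated constant $7.9$. Both follow from the bounds $|a_P|\le0.31$ and $|b_P|\le0.08\rho$.
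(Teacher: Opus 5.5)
Your proposal is correct and follows the paper's proof essentially step by step. For (1) you place the crossing in $\rho D$, deduce $\ang(P,Q)\le\min\{\alpha_P,\alpha_Q\}$, and convert the angle to a slope difference; the only difference is that you use the mean value theorem with factor $1+0.31^2$, where the paper uses $|\sin(\theta_P-\theta_Q)|/(\cos\theta_P\cos\theta_Q)\le\mu/\cos^2 0.3$. For (2) you use $|\Delta b|=|x_0||\Delta a|$ and the triangle inequality, as the paper does.

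One attribution is slightly off, though it does not affect the argument. That the two planks lie in each other's $I$-sets comes directly from the definitions of $I_P$ and $\alpha_P$, as in the proof of Lemma~\ref{lem:bad-cells}(d), not from Lemma~\ref{lem:bad-cells}(c).
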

\begin{proof}
In (1), the intersection point has vertical coordinate at most
$0.08\rho+0.85\rho\cdot0.31<0.35\rho$, so it lies in $\rho D$.  Therefore,
$\mu:=\ang(P,Q)\le\min\{\alpha_P,\alpha_Q\}$.  Since both directions lie within $0.3$ of the horizontal,
\[
|a_P-a_Q|
=
|\tan\theta_P-\tan\theta_Q|
=
\frac{|\sin(\theta_P-\theta_Q)|}{|\cos\theta_P\cos\theta_Q|}
\le
\frac{\mu}{\cos^2 0.3}
\le 1.1\mu,
\]
where 
$a_P = \tan \theta_P$
and
$a_Q = \tan \theta_Q$.
This proves (1) because $h_P=0.07\rho\alpha_P$.

For (2), either the lines are parallel or their intersection abscissa $x$ has absolute value greater than $0.85\rho$. 
Because $(a_P-a_Q)x = b_P-b_Q$, the first assertion holds.
For $|x| \leq 0.5 \rho$, we have e.g.
\[
|y_P(x)-y_Q(x)|
\ge |b_P-b_Q|-|(a_P-a_Q)x|
\ge (1-0.59)|b_P-b_Q|
=0.41|b_P-b_Q|.
\]
\end{proof}

\begin{lemma}[Vitali-type selection]\label{lem:band-selection}
There is $\cV\subset\bad$ such that the following hold:
\begin{enumerate}[label=(\Alph*)]
\item The bands $B_j$, $j\in\cV$, are pairwise disjoint.
\item For every $P\in\bad$ there is $j\in\cV$ with $h_j\ge h_P$ and
\[
 \{(x,y):|x|\le0.5\rho,\ \dist((x,y),\ell_P)\le 2h_P\}\subset B_j^{24}.
\]
\end{enumerate}
\end{lemma}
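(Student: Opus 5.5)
We run the standard greedy Vitali selection: order the planks in $\bad$ by decreasing $h_P$ and scan the list once. A plank $P$ is added to $\cV$ exactly when its band $B_P$ is disjoint from the bands $B_j$ of all planks already selected. $\bad$ is finite, so this terminates, and (A) holds by construction. For (B), take any $P\in\bad$. If $P\in\cV$, choose $j=P$. The $2h_P$-neighbourhood of $\ell_P$ over $|x|\le0.5\rho$ lies within vertical distance $2h_P/\cos0.3<2.1h_P$ of the graph $y_P$, which is inside $B_P^{24}$. If $P\notin\cV$, some earlier $j\in\cV$ has $B_j\cap B_P\ne\varnothing$. Since $j$ came earlier in the order, $h_j\ge h_P$. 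What remains is the containment in $B_j^{24}$. For this it suffices to show
\[
\sup_{|x|\le0.5\rho}|y_P(x)-y_j(x)|+2.1h_P\le 24h_j ,
\]
because a point of the $2h_P$-neighbourhood of $\ell_P$ is within vertical distance $2.1h_P$ of $y_P$, and hence within $24h_j$ of $y_j$.

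To bound $\sup|y_P-y_j|$, split according to Lemma \ref{lem:carrier-lines}. The bands meet at some $x_0$ with $|x_0|\le0.5\rho$, so
\[
|y_P(x_0)-y_j(x_0)|\le1.05(h_P+h_j)\le2.1h_j .
\]

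In case (1), where $\ell_P,\ell_j$ meet at abscissa $|x|\le0.85\rho$, the difference $y_P-y_j$ is affine with slope $|a_P-a_j|$. For $|x|\le0.5\rho$ we have $|x-x_0|\le\rho$, so
\[
|y_P(x)-y_j(x)|\le2.1h_j+\rho|a_P-a_j|\le2.1h_j+2\cdot7.9\min\{h_P,h_j\}\le17.9h_j .
\]
Adding $2.1h_P\le2.1h_j$ gives at most $20h_j<24h_j$.

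In case (2), the two-sided comparison $0.41|b_P-b_j|\le|y_P-y_j|\le1.59|b_P-b_j|$ holds on $|x|\le0.5\rho$. Evaluating at $x_0$ gives $|b_P-b_j|\le2.1h_j/0.41<5.13h_j$. Therefore
\[
\sup|y_P-y_j|\le1.59\cdot5.13h_j<8.2h_j ,
\]
and adding $2.1h_P$ gives at most $10.3h_j<24h_j$.

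The main obstacle is case (1). There the lines may diverge linearly across the window, and the only control on the slope difference is the bound $0.5\rho|a_P-a_j|\le7.9\min\{h_P,h_j\}$ of Lemma \ref{lem:carrier-lines}(1). This is why we need $\min$ rather than $h_P$ alone, and why $\max$ would not do. That bound ultimately comes from both angles being dominated by $\alpha_P$ and $\alpha_j$, since the intersection lies in $\rho D$. The rest is bookkeeping: the distance $|x-x_0|$ is at most $\rho$, not $0.5\rho$, which is what gives the factor $2$, and the constants still fit under $24$.
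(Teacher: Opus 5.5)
Your proof is correct, but it uses a different selection rule from the paper.

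\textbf{The paper's rule.} The paper also scans $\bad$ by decreasing $h_P$. It rejects $P$ exactly when some already selected $j$ has $|b_P-b_j|\le 12h_j$, which compares intercepts at $x=0$.
\begin{itemize}
\item It must then prove (A) separately, using the lower bounds $12h_j-7.9h_P\ge 4.1h_j$ in case (1) and $0.41\cdot 12h_j$ in case (2). Both exceed $1.05(h_P+h_j)$.
\item (B) then comes from upper bounds at the intercept: $12h_j+7.9h_P$ in case (1) and $1.59\cdot 12h_j$ in case (2).
\end{itemize}

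\textbf{Your rule.} You select by actual disjointness of the bands. This is the textbook Vitali scheme: (A) becomes a tautology and all the work moves into (B).
\begin{itemize}
\item In (B) you only know that the bands meet at some $x_0$ with $|x_0|\le 0.5\rho$, not at $x=0$.
\item In case (1) this costs a factor $2$ on the slope term, since $|x-x_0|\le\rho$.
\item In case (2) you need the lower half of the comparison in Lemma~\ref{lem:carrier-lines}(2) to pass from $x_0$ back to $|b_P-b_j|$.
\end{itemize}

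\textbf{Comparison.} The paper's rule gives an explicit numerical threshold and a clean intercept-based separation. Yours avoids proving separation at all. It also gives slightly better constants: $20h_j$ and about $10.3h_j$, versus roughly $22h_j$ in the paper. You also treat selected $P$ explicitly by taking $j=P$, which the paper leaves implicit.

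\textbf{A correction to your closing commentary.} Since $h_P\le h_j$, you have $\min\{h_P,h_j\}=h_P$. Your case (1) estimate only uses $15.8\min\{h_P,h_j\}\le 15.8h_j$. So a bound $0.5\rho|a_P-a_j|\le 7.9\max\{h_P,h_j\}=7.9h_j$ would have worked equally well, and ``max would not do'' is false. What matters is only that the slope difference is controlled by the $h$'s at all. That control comes from $\ang(P,j)\le\min\{\alpha_P,\alpha_j\}$ when the center lines cross inside $\rho D$. This does not affect the validity of your proof.
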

\begin{proof}
Order $\bad$ by decreasing $h_P$ and select $P$ unless an already selected $j$ satisfies
$|b_P-b_j|\le12h_j$.  

For (A), if $P,j$ are selected with $h_j \geq h_P$ then
Lemma \ref{lem:carrier-lines} gives
\[
|y_P(x)-y_j(x)|
\ge |b_P-b_j|-|a_P-a_j||x|
>12h_j-7.9h_P
\ge 4.1h_j
\]
in case $(1)$, and
\[
|y_P(x)-y_j(x)|
\ge 0.41|b_P-b_j|
>0.41\cdot 12\,h_j
=4.92h_j
>4.9h_j.
\]
in case (2).
Both lower bounds are larger than $1.05(h_j+h_P)$.  

It remains to prove (B). If $P$ is rejected, choose the earlier $j$ with $h_j\ge h_P$ and $|b_P-b_j|\le12h_j$.  Lemma \ref{lem:carrier-lines} gives when $|x| \leq 0.5 \rho$,
\[
|y_P(x)-y_j(x)|
\le |b_P-b_j|+|a_P-a_j||x|
\le 12h_j+7.9h_P
\le 19.9h_j.
\]
in case (1) and
\[
|y_P(x)-y_j(x)|
\le 1.59|b_P-b_j|
\le 1.59\cdot 12h_j
<19.1h_j.
\]
in case (2).  
If in addition $\dist((x,y),\ell_P)\le 2h_P$
then
\[
|y-y_P(x)|
\le \frac{2h_P}{\cos\theta}
\le \frac{2h_P}{\cos 0.3}
\le 2.2h_j .
\]
The total is below $24h_j$.
\end{proof}

\subsection{Proof of Theorem \ref{thm:pruning}}
Assume towards a contradiction that \eqref{eq:contradiction-hole} holds.  
For every bad cell \(G\), Lemmas~\ref{lem:bad-cells}(b),(e) give
\[
G\subset
\{(x,y): |x|\le 0.5\rho,\ \dist((x,y),\ell_{P(G)})\le 2h_{P(G)}\}.
\]
By Lemma~\ref{lem:band-selection}(B), therefore,
\[
\bigcup_{G\text{ bad}} G\subset \bigcup_{j\in\cV} B_j^{24}.
\]
Because the bad cells are disjoint,
\[
\sum_{G\ \mathrm{bad}}|G\cap rD|
\le
\sum_{j\in\cV}|B_j^{24}\cap rD|.
\]
Each vertical section of \(B_j^{24}\) has length \(48h_j\), so
\[
\sum_{G\ \mathrm{bad}}|G\cap rD|
\le96r\sum_{j\in\cV}h_j.
\]
By Claim~\ref{clm:SandB}
and Lemma~\ref{lem:band-selection}(A),
the sets $\Sigma_j'$ are pairwise disjoint and $|\Sigma_j'|\ge0.98\rho h_j$.
By Lemma \ref{lem:surviving-band},
\[
 |\Sigma_j'\setminus H'|
 \le0.003\cdot1.2\rho h_j
 \le0.004|\Sigma_j'|.
\]
Consequently,
\[
 0.98\rho\sum_{j\in\cV}h_j
 \le\sum_j|\Sigma_j'|
\le\frac1{0.996}\sum_j|\Sigma_j'\cap H'|
 \le\frac{|H'|}{0.996}.
\]
Lemma~\ref{lem:bad-cells}(a) gives the contradiction
\[
\frac{|H'|}{2\lambda}
< \sum_{G\ \mathrm{bad}}|G\cap rD| \leq 
96r\sum_{j\in\cV}h_j
<
99\frac r\rho |H'|.
\]
\qed

\section{Removable pairs pay overlap}

In this section, we translate overlap to geometry.
We explain how a removable pair leads to a dense region that contributes to overlap. 

\begin{definition}
Call a disk $B \subset \R^2$ \emph{$\delta$-dense} if $B\subset D$ and
\[ \int_B(M_{\cP}(x)-1)_+\,dx>\delta |B|.
\]
The \emph{dense region} $U_\delta$ is the union of all such $\delta$-dense disks; see Figure \ref{fig:dense-region}.
\end{definition}

\begin{figure}[]
    \centering 
    \input{dense-region.tex}
    \caption{An illustration of a set of planks and the corresponding dense region $U_\delta$.}
    \label{fig:dense-region}
\end{figure}

The main assertion of this section is the following proposition. 
Choose constants
\[
 \delta=10^{-4},\qquad 
 A=10^{13},\qquad \gamma=10^{-14}.
\]

\begin{proposition}\label{prop:pair-payment}
Let $P$ be central and $Q$ deep, let
$\ell_P\cap\ell_Q=\{c\}\subset\rho D$, and put $\beta=\ang(P,Q)>0$.  Suppose
\begin{equation}\label{eq:thin-A}
 \beta\ge A\frac w\rho
\end{equation}
and $(P,Q)$ is removable.  Then,
\begin{equation}\label{eq:pair-payment}
 |U_{\delta}\cap(P\cup Q)\cap5\rho D|\ge \gamma\rho w.
\end{equation}
\end{proposition}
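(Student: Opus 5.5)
The plan is to turn the removability inequality \eqref{eq:removable} into many \emph{crossings} of the arms $P$ and $Q$ by planks of $N(P,Q)$. Each crossing creates points of multiplicity at least $2$ on $P\cup Q$, and such points sit at the centres of $\delta$-dense disks of radius comparable to $w$.

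\textbf{Step 1: a crossing gives a dense disk.} I will first prove a local statement. Let $R\in\cP\setminus\{P\}$ meet $P$ at a point $x$ of $P\cap 5\rho D$ that lies well inside both strips, say at distance $\ge w/4$ from all four boundary lines. Then the disk $B=B(x,w/8)$ satisfies $B\subset P\cap R\cap D$. Hence $\int_B(M_\cP-1)_+\ge|B|>\delta|B|$, so $B\subset U_\delta$. The same holds with $Q$ in place of $P$. Near a crossing at angle $\theta$, the set of such good centres on $P$ is a parallelogram of area $\gtrsim w^2/\sin\theta$. It follows that
\[
|U_\delta\cap P\cap 5\rho D|\gtrsim w\cdot\HH\bigl(\{\text{points of }\ell_P\cap 5\rho D\text{ covered by the cores of planks }R\ne P\}\bigr),
\]
and similarly for $Q$. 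The proposition therefore reduces to showing that the cores of $N(P,Q)$-planks cover a length $\gtrsim\tau\rho$ of the union of the two arms $\ell_P\cap 5\rho D$ and $\ell_Q\cap 5\rho D$.

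\textbf{Step 2: mass in $K$ forces crossings of the arms.} Use coordinates with $\ell_P$ horizontal and the apex of the wedge at $c$. Decompose $K(P,Q)$ dyadically by distance $t\in[2^{-k-1}\rho,2^{-k}\rho]$ from $c$. At distance $t$ the wedge has transversal width $\asymp t\beta$. By Lemma~\ref{lem:hull-lower} and \eqref{eq:removable}, some dyadic piece, or a positive proportion of the pieces, is covered by $\bigcup N(P,Q)$ to a fraction $\gtrsim\tau$. Inside such a piece I split into two cases.

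In the first case the $N$-planks have bounded multiplicity there. Then about $\tau t\beta/w$ distinct $N$-planks pass through a cross-section of the piece. This number is large precisely because of \eqref{eq:thin-A}, and this is where $A$ is used.

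Each such plank $R$ makes angle $\theta_R<2\beta$ with $P$ or with $Q$. Being a full strip, it must leave the wedge, and I will argue it does so through the arms inside $5\rho D$. If $R$ leaves through $P$, its core covers a length $\ge w/\sin\theta_R\gtrsim w/\beta$ of $\ell_P$. Summing over the $\tau t\beta/w$ planks gives covered length $\gtrsim\tau t$ on the arms at distance $\lesssim t$ from $c$. Taking the dominant dyadic scale $t\asymp\rho$ gives length $\gtrsim\tau\rho$, and Step 1 finishes this case.

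\textbf{Step 3, the main obstacle: concentrated packets that avoid the arms.} The hard case is when the $N$-planks inside the piece heavily overlap one another, forming a nearly parallel packet of unbounded multiplicity. Such a packet may also escape through the far end of $K$ (near $\partial(\rho D)$) rather than crossing $P$ or $Q$ nearby. In this case the overlap sits in dense disks of $K$ that are \emph{not} on $P\cup Q$, and naive counting fails.

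Here I will use the flattening mechanism. Replace the packet by bounded-multiplicity Lipschitz ribbons, routed through the dense disks, so that the count of Step 2 applies to the ribbons. Each ribbon crossing an arm must then be converted back into a dense disk on $P\cup Q$, which means showing the re-routing can be done inside $U_\delta$. I also expect to enlarge the ambient region to $5\rho D$ so that lines with angle $<2\beta$ escaping the far end still cross the extension of an arm. This is consistent with the $5\rho D$ in \eqref{eq:pair-payment}.

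I expect this re-routing step, which must show there is enough room in $U_\delta$ for all the ribbons, to be the main difficulty. The generous constants $A=10^{13}$ and $\gamma=10^{-14}$ are there to absorb the losses in it.
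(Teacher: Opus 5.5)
\textbf{There is a genuine gap.} Step~2 fails, and so does the reduction stated at the end of Step~1.

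Bounded multiplicity of the $N(P,Q)$-planks inside a dyadic piece gives no control over how their traces on the arms overlap: distinct planks can leave the triangle through the same point. Here is a concrete instance. Take $c$ at the origin, $\ell_P$ horizontal and $\ell_Q$ of slope $\tan\beta$. Take about $0.022\rho\beta/w$ planks whose centre lines pass through $p_0=(\rho/4,0)$, with slopes spread over $[0.97\tan\beta,0.999\tan\beta]$.
\begin{itemize}
\item The fan covers about $0.008\rho^2\beta$ of $K(P,Q)$, a fraction $\approx0.004>\tau$, so $(P,Q)$ is removable.
\item On the piece $\rho/2\le x\le\rho$ its multiplicity is between $1$ and about $3$, so this is your Case~1.
\item Every fan plank crosses $\ell_P$ only at $p_0$. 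It meets $\ell_Q$ only at abscissa $\le-8\rho$, outside $5\rho D$, so enlarging to $5\rho D$ does not help.
\end{itemize}
Hence the cores of the planks other than $P,Q$ cover a length $O(w/\beta)=O(\rho/A)$ of the arms, not $\gtrsim\tau\rho$. The proposition still holds in this example, but for a different reason. For $\sigma\lesssim\rho/3$, the disk of radius $2\sigma\beta$ centred on $\ell_P$ at distance $\sigma$ to the right of $p_0$ is $\delta$-dense, because it contains a slice of the fan. That overlap lies \emph{off} the arm, so your crossing mechanism in Step~1 cannot detect it.

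So the whole proof has to go through what you defer to Step~3, and the proposal gives no argument there. The flattening must be global, not restricted to ``concentrated'' pieces, and its bounded multiplicity must hold on the arms themselves. The paper does this in four steps.
\begin{enumerate}
\item It pigeonholes (Lemma~\ref{lem:triangle-reduction} plus a split of directions) to a packet $\cR_0$ whose directions lie within $\theta$ of a fixed $d$, with $3\theta\le0.1\sin\beta$. The packet covers a fixed fraction of a triangle $T$ with arms of length $2\rho$.
\item It flattens via $g_1=f_1$, $g_2=f_2$, $g_i=\max\{f_i,g_{i-2}+s\}$ with $s=w/100$. The ribbons $|y-g_i|<0.54w$ cover the packet and have multiplicity $\le220$ everywhere.
\item The key local fact is that every ribbon point $z$ on $\ell_P$ or $\ell_Q$ inside $4\rho D$ satisfies $B(z,w/2)\subset U_\delta$. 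To see this, unwind $g_i(x_0)=f_{i-2q}(x_0)+qs$. If $q\ge1$, the $2q+1$ centres lie in an interval of length $qs$ and give excess $\gtrsim q^2w^2$ inside $B((x_0,g_i(x_0)),3(qs+w))$; the case $qs>\rho$ is handled separately. If $q=0$, there is a genuine crossing with a plank.
\item A slightly enlarged triangle, whose third side is steeper than the ribbons, forces every ribbon component to have an endpoint on an arm, with trace $\ge2h/(\mu_0+\tan\theta)$.
\end{enumerate}
In the fan example this spreads the pile-up at $p_0$ along $\ell_P$. No separate ``room in $U_\delta$'' argument is needed, because the recursion itself certifies density. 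Without these ingredients your proposal is a plan, not a proof.
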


The rest of this section is devoted to the proof of the proposition. 
\subsection{Reduction to a triangle}

Let $J_P=\ell_P\cap\rho  D$, $J_Q=\ell_Q\cap\rho D$, and
$K_0=\conv(J_P\cup J_Q)$.  The two chords divide $K_0$ into four triangles with apex $c$.  Since
$|J_P|\ge1.99\rho$ and $|J_Q|\ge1.32\rho$,
\begin{equation}\label{eq:K0-lower}
 |K_0|=\frac12|J_P||J_Q|\sin\beta\ge0.83\rho^2\beta.
\end{equation}

\begin{lemma}\label{lem:triangle-reduction}
Let $\eta=\tau/40=2.5\cdot10^{-6}$.  Under the hypotheses of Proposition \ref{prop:pair-payment}, there exist a subfamily

\[\cR\subset N'(P,Q):=N(P,Q)\setminus\{P,Q\},\] one distinguished arm $\ell_*\in\{\ell_P,\ell_Q\}$, and unit vectors $e_0,e_1$ along the two center lines of $P,Q$ such that
the following hold:
\begin{enumerate}[label=(\roman*)]
\item Every $R\in\cR$ makes angle at most $2\beta$ with $\ell_*$.
\item With
\[
 T=\conv\{c,c+2\rho e_0,c+2\rho e_1\},
\]
we have
\[
\Big|T\cap\bigcup \cR \Big|\ge\eta|T|.
\]
\item The opening angle of $T$ at $c$ is $\beta$ or $\pi-\beta$, and
$|T|=2\rho^2\sin\beta$.
\end{enumerate}
\end{lemma}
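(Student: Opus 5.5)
Start from removability: $|K(P,Q)\cap\bigcup N(P,Q)|\ge\tau|K(P,Q)|$. First discard $P$ and $Q$ themselves. Each contributes at most $|P\cap\rho D|\le 2.2\rho w$. By \eqref{eq:thin-A}, $w\le\rho\beta/A$, so together they contribute at most $5\rho^2\beta/A$. By Lemma~\ref{lem:hull-upper} and \eqref{eq:K0-lower}, $|K(P,Q)|$ is comparable to $\rho^2\beta$: it lies between $0.83\rho^2\beta$ and $2\rho^2\beta+10\rho w\le2.1\rho^2\beta$. Hence $N'(P,Q)$ still covers at least $(\tau-10^{-12})|K(P,Q)|\ge0.8\tau\rho^2\beta$ of $K(P,Q)$.

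Next localize to the four triangles. Since $K(P,Q)\subset K_0+B(0,1.1w)$, the $1.1w$-neighbourhood of $\partial K_0$ has area $O(\rho w)=O(\rho^2\beta/A)$ and is negligible. So $N'$ covers at least about $0.79\tau\rho^2\beta$ of $K_0$. The chords $J_P,J_Q$ split $K_0$ into four triangles with apex $c$, so one of them, say $T_0$, receives covered area at least $0.19\tau\rho^2\beta$. That triangle has arms along $\ell_P,\ell_Q$ in directions $e_0,e_1$ from $c$. Its arm lengths are at most $2\rho$, so $T_0\subset T=\conv\{c,c+2\rho e_0,c+2\rho e_1\}$.

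Part (iii) is immediate: the opening angle at $c$ is $\beta$ or $\pi-\beta$, and $|T|=\frac12(2\rho)^2\sin\beta=2\rho^2\sin\beta\le2\rho^2\beta$. Therefore $|T\cap\bigcup N'|\ge0.19\tau\rho^2\beta\ge0.09\tau|T|$.

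Then split $N'$ according to which arm is near. Each $R\in N'$ satisfies $\min\{\ang(R,P),\ang(R,Q)\}<2\beta$. Let $N_P$ be the planks within $2\beta$ of $\ell_P$ and $N_Q$ the rest, which are within $2\beta$ of $\ell_Q$. One of the two classes covers at least half of the area above, so at least $0.045\tau|T|\ge\eta|T|$, since $\eta=\tau/40$. Take $\cR$ to be that class and $\ell_*$ the corresponding center line; this gives (i) and (ii).

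The only delicate point is the bookkeeping of constants: making sure the boundary layer and the planks $P,Q$ cost only an $O(1/A)$ fraction, and that $K(P,Q)$ really reduces to $K_0$. The latter follows from the Minkowski-neighbourhood inclusion in the proof of Lemma~\ref{lem:hull-upper} together with Steiner's formula. The factor-$4$ loss from the triangle choice and the factor-$2$ loss from the arm choice leave ample room inside $\tau/40$.
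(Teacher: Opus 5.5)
Your proof is correct and follows essentially the same route as the paper. You use \eqref{eq:thin-A} to discard the planks $P,Q$ and the $O(\rho w)$ layer $K(P,Q)\setminus K_0$, then pigeonhole over the four triangles of $K_0$ and over the two angle classes, and enlarge the chosen triangle to $T$; the only difference is that you pigeonhole over triangles before angle classes (the paper does the reverse), which does not matter.
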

\begin{proof}
As in the proof of Lemma \ref{lem:hull-upper},
$K(P,Q)\subset K_0+B(0,1.1w)$, and
\[
 |K(P,Q)\setminus K_0|\le9\rho w,
 \qquad |K(P,Q)\cap(P\cup Q)|\le6\rho w.
\]
Because $(P,Q)$ is removable,  \eqref{eq:K0-lower} implies
\[
 \Big|K_0\cap\bigcup N'(P,Q)\Big|
 \ge\tau|K_0|-15\rho w
 \ge\frac\tau2|K_0|.
\]
Assign each plank in $N(P,Q)$ to 
the plank between $P,Q$ with a smaller angle. One of the two classes covers at least $(\tau/4)|K_0|$.  One of the four constituent triangles of $K_0$ then receives at least $(\tau/16)|K_0|\ge0.05\tau\rho^2\beta$.  Extending its two radial sides to length $2\rho$ produces $T$. Since $|T|\le2\rho^2\beta$, the covered fraction is at least $\tau/40=\eta$.
\end{proof}

\subsection{Flattening and the trace estimate}

We isolate the analytic-geometric statement used by Proposition \ref{prop:pair-payment}.

\begin{lemma}\label{lem:main-packet}
Let $T$ be the triangle from Lemma \ref{lem:triangle-reduction}. Let $d$ be a direction making angle at most $\psi_0$ with each arm of $T$.  Let $\cR_0$ be a family of width-$w$ planks, disjoint from $\{P,Q\}$, whose center lines make angle at most $\theta$ with $d$.  Suppose
\begin{equation}\label{eq:theta-cond}
 3\theta\le0.1\sin\beta,
\end{equation}
\begin{equation}\label{eq:packet-cond}
 \Big |T\cap\bigcup \cR_0 \Big|\ge\eta_0|T|,
 \qquad
 w\le\frac{\eta_0\rho\sin\beta}{6000},
 \qquad 0<\eta_0\le\frac14.
\end{equation}
Put
\[
 \mu_0=\min \{1,\tan \psi_0\}.
\]
Then, with $\delta_1=10^{-4}$,
\begin{equation}\label{eq:packet-conclusion}
 |U_{\delta_1}\cap(P\cup Q)\cap5\rho D|
 \ge\frac{\eta_0\sin\beta}{5200(\mu_0 + \tan\theta)}\rho w.
\end{equation}
\end{lemma}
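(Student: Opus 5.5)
The plan is to convert the area hypothesis $|T\cap\bigcup\cR_0|\ge\eta_0|T|$ into a count of crossings of the two arms of $T$, and then convert each crossing back into overlap near $P\cup Q$. This is the "flattening" strategy announced in the introduction.

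\emph{Step 1: length of packet planks in $T$.} Each $R\in\cR_0$ has direction within $\theta$ of $d$, and $d$ is within $\psi_0$ of both arms. By \eqref{eq:theta-cond}, every center line $\ell_R$ is nearly parallel to the arms but transversal to the third side, since the opening angle at $c$ is $\beta$ or $\pi-\beta$. Hence $R\cap T$ is essentially a strip segment of length at most $O(\rho)$ and area at most $O(\rho w)$. The hypothesis therefore forces the number $n$ of packet planks meeting $T$ to satisfy $n\gtrsim \eta_0|T|/(\rho w)\approx \eta_0\rho\sin\beta/w$. Because $w\le \eta_0\rho\sin\beta/6000$, this number is large.

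\emph{Step 2: these planks must cross $P$ or $Q$ inside $5\rho D$.} A line entering $T$ with direction within $\theta+\psi_0$ of the arms cannot stay between the arms for long. Moving along $\ell_R$ away from the far side, the width of $T$ shrinks linearly. Leaving through the arms requires traveling a distance comparable to (distance to arm)$/\sin(\text{relative angle})$, so the crossing happens within $O(\rho)$ of $T$, hence inside $5\rho D$. The crossing region $R\cap P$ (or $R\cap Q$) is a parallelogram of area about $w^2/\sin\angle(R,P)$, and $\angle(R,P)\lesssim \psi_0+\theta$. So each crossing contributes overlap of area roughly $w^2/(\mu_0+\tan\theta)$ on $P\cup Q$. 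Summing over $n$ planks gives total multiplicity-$\ge2$ area on $P\cup Q$ of order
\[
\frac{\eta_0\rho\sin\beta}{w}\cdot\frac{w^2}{\mu_0+\tan\theta},
\]
which is the right-hand side of \eqref{eq:packet-conclusion} up to constants.

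\emph{Step 3: flattening, from overlap to dense disks.} Raw overlap on $P\cup Q$ is not enough, since we need $U_{\delta_1}\cap(P\cup Q)$ to be large in area. The danger is that crossings concentrate: many packet planks pile onto the same spot, so the multiplicity is huge on a small set. To handle this, I would replace the packet by bounded-multiplicity Lipschitz ribbons. I would sort the planks by their crossing coordinate along each arm and reroute greedily, in a max-flow/min-cut style, so that each point of $T$ is covered by at most $O(1)$ ribbons. This preserves the covered area in $T$ up to a constant, and hence preserves the crossing count on the arms. Each ribbon crossing of an arm marks a disk of radius comparable to $w/(\mu_0+\tan\theta)$, centered on $P$ or $Q$, inside which the original planks have overlap density above $\delta_1$: both the arm plank and a packet plank are present on a definite fraction of the disk. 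Disjointness of ribbons, together with the Lipschitz control, makes these disks have bounded overlap, so their union within $P\cup Q$ has area comparable to the crossing count times $w^2/(\mu_0+\tan\theta)$.

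\emph{Main obstacle.} I expect Step 3 to be the hardest: showing there is enough room to reroute all planks through dense disks while keeping multiplicity bounded. A naive packet can have unbounded $M$ yet occupy little area in $P\cup Q$. The first thing I would try is the greedy sweep along the arm. When a disk is not $\delta_1$-dense, its packet mass is small, so the ribbon can be shifted there at low cost. Disks that are already dense absorb the excess.
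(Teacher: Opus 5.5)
\textbf{There is a genuine gap, in Step 3.} That step carries the whole lemma, and you neither construct the rerouting nor give a valid reason for density.

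Once ribbons are displaced away from the packet, a ribbon's crossing with $\ell_S$, $S\in\{P,Q\}$, need not lie near the crossing of any packet plank with $S$. So ``both the arm plank and a packet plank are present'' has no basis there.

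Even when a packet plank $R$ does cross $S$ at that spot, $R\cap S$ does not make a disk of radius $\approx w/(\mu_0+\tan\theta)$ centered on $\ell_S$ dense. The strip $S$ fills only a fraction $O(\mu_0+\tan\theta)$ of such a disk. In the small-angle application $\mu_0+\tan\theta\le 3.1\beta$, and $\beta$ can be far below $\delta_1=10^{-4}$. Density has to be certified at scale $w$ around every point of the ribbon's trace on the arm. The factor $1/(\mu_0+\tan\theta)$ should come from the length of that trace, not from the radius of one disk.

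Your heuristic ``non-dense disks are cheap, dense disks absorb the excess'' points the right way, but it is not a construction. Sorting by crossing coordinate along an arm also does not control multiplicity inside $T$, where packet planks cross one another.

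\textbf{The missing idea is a flattening whose deviations certify their own density.}
\begin{itemize}
\item With $d$ horizontal, take the pointwise order statistics $f_1\le\dots\le f_m$ of the center heights. Set $g_1=f_1$, $g_2=f_2$, $g_i=\max\{f_i,g_{i-2}+s\}$ with $s=w/100$.
\item The ribbons $\{|y-g_i(x)|<0.54w\}$ cover the packet. They have multiplicity at most $220$ since $g_{i+2}\ge g_i+s$, and their slopes are still bounded by $\tan\theta$.
\item At any $x_0$ one has $g_i(x_0)=f_{i-2q}(x_0)+qs$ for some $q\ge0$.
\item If $q\ge1$, then $2q+1$ center lines lie in a window of height $qs$. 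This yields excess at least $0.02q^2w^2$ in the disk of radius $3(qs+w)$ around $(x_0,g_i(x_0))$. That disk is $\delta_1$-dense with no reference to $P$ or $Q$. When $qs>\rho$ one uses $B(c,8\rho)$ instead, so the disk stays inside $D$.
\item If $q=0$, a trace point $z\in\ell_S$ lies within $0.54w$ of some packet center line $\ell_R$. Then $R\cap S$ contains a disk of radius $3w/16$ inside $B(z,2w)$, which makes $B(z,2w)$ dense.
\item In both cases $B(z,w/2)\subset S\cap U_{\delta_1}$ for every trace point $z$. Bounded multiplicity then turns the per-ribbon trace lengths, each at least $1.08w/(\mu_0+\tan\theta)$, into area.
\end{itemize}

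\textbf{A further point.} Your Step 2 argument that a crossing line must leave $T$ through an arm uses straightness. For piecewise affine ribbons you need the third side to be steeper than $\tan\theta$. This is why $T$ has to be enlarged to a triangle $\widetilde T$ whose third side is steeper than the ribbons, and it is where $3\theta\le0.1\sin\beta$ is used.
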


\begin{proof}
Discard the planks that do not meet $T$. Use orthonormal coordinates with origin at the crossing point $c$ and horizontal axis in direction $d$
(sets such as $D$, $4\rho D$, and $5\rho D$ are with respect to the original coordinate system).  Each center line of $R \in \cR$ is a graph
\[
 y=a_Rx+b_R,\qquad |a_R|\le\tan\theta\le\frac14.
\]
A vertical section of a width-$w$ plank with slope $a_R$ has length $w\sqrt{1+a_R^2}\in[w,1.04w]$.  Since $T\subset B(c,3\rho)$ in the original plane, every plank in $\cR_0$ has a center-line point whose $c$-centered coordinates $(x_1,y_1)$ satisfy $\sqrt{x_1^2+y_1^2}\le3\rho+w/2$.  Hence, for $|x|\le5\rho$ in these $c$-centered coordinates,
\begin{equation}\label{eq:center-height-bound}
|a_Rx+b_R|\le |y_1|+\frac14|x-x_1|
 \le 3\rho+\frac w2+\frac14\left(8\rho+\frac w2\right)
 <5.01\rho.
\end{equation}
The same calculation with $|x|\le6\rho$ gives 
\begin{equation}\label{eq:center-height-bound-six}
 |a_Rx+b_R|\le5.26\rho.
\end{equation}

\medskip
\noindent\emph{Step 1: flatten the packet.}
For fixed $x$, let
\[
 f_1(x)\le\cdots\le f_m(x)
\]
be the ordered center heights of the planks.  Each $f_i$ is continuous and piecewise affine, with every affine slope in $[-\tan\theta,\tan\theta]\subset[-1/4,1/4]$.  Put $s=w/100$ and inductively define
\[
 g_1=f_1,\qquad g_2=f_2,\qquad
 g_i=\max\{f_i,g_{i-2}+s\}\quad(i\ge3);
\]
see Figure \ref{fig:lipschitz}.
The functions $g_i$ are again continuous and piecewise affine.  Their slopes remain in $[-\tan\theta,\tan\theta]\subset[-1/4,1/4]$. By definition,
\begin{equation}\label{eq:g-separation}
 g_{i+2}\ge g_i+s.
\end{equation}
In addition, for every $p$ and $x$ there is $j\le p$ such that
\begin{equation}\label{eq:g-approx}
 f_p(x)\le g_j(x)<f_p(x)+s.
\end{equation}
Indeed, take the least $j\le p$ with $g_j(x)\ge f_p(x)$.  If $g_j(x)>f_p(x)$, then $g_j=g_{j-2}+s$, and minimality gives $g_{j-2}(x)<f_p(x)$.

\begin{figure}[]
    \centering 
    \begin{tikzpicture}[scale=1.2,
  x=.72cm,y=2.20cm,
  line cap=round,line join=round,
  every node/.style={font=\scriptsize}
]

\tikzset{
  panel/.style={draw=black!35,line width=.45pt},
  rawline/.style={draw=black!23,densely dashed,line width=.42pt},
  fline/.style={draw=black,line width=.88pt}
}

\begin{scope}
  \clip (-4.0,-1.97) rectangle (4.0,0.55);
  \draw[fline] plot coordinates {(-4.000,-1.920) (-2.500,-1.470) (-1.625,-1.278) (-0.875,-1.173) (0.000,-1.120) (0.875,-1.173) (1.625,-1.278) (2.500,-1.470) (4.000,-1.920)};
  \draw[fline] plot coordinates {(-4.000,-1.800) (-2.500,-1.470) (-2.062,-1.339) (-1.625,-1.278) (-1.250,-1.195) (-0.875,-1.173) (-0.350,-1.099) (0.000,-1.120) (0.350,-1.099) (0.875,-1.173) (1.250,-1.195) (1.625,-1.278) (2.062,-1.339) (2.500,-1.470) (4.000,-1.800)};
  \draw[fline] plot coordinates {(-4.000,-1.610) (-2.062,-1.339) (-1.667,-1.220) (-1.250,-1.195) (-0.714,-1.077) (-0.350,-1.099) (0.000,-1.050) (0.350,-1.099) (0.714,-1.077) (1.250,-1.195) (1.667,-1.220) (2.062,-1.339) (4.000,-1.610)};
  \draw[fline] plot coordinates {(-4.000,-1.360) (-1.667,-1.220) (-1.111,-1.053) (-0.714,-1.077) (-0.361,-0.999) (0.000,-1.050) (0.361,-0.999) (0.714,-1.077) (1.111,-1.053) (1.667,-1.220) (4.000,-1.360)};
  \draw[fline] plot coordinates {(-4.000,-0.880) (-1.111,-1.053) (-0.750,-0.945) (-0.361,-0.999) (0.000,-0.920) (0.361,-0.999) (0.750,-0.945) (1.111,-1.053) (4.000,-0.880)};
  \draw[fline] plot coordinates {(-4.000,-0.490) (-0.750,-0.945) (-0.385,-0.835) (0.000,-0.920) (0.385,-0.835) (0.750,-0.945) (4.000,-0.490)};
  \draw[fline] plot coordinates {(-4.000,-0.040) (-0.385,-0.835) (0.000,-0.720) (0.385,-0.835) (4.000,-0.040)};
  \draw[fline] plot coordinates {(-4.000,0.480) (0.000,-0.720) (4.000,0.480)};
\end{scope}
\draw[panel] (-4.0,-1.97) rectangle (4.0,0.55);

\begin{scope}[xshift=7.00cm]
  \clip (-4.0,-1.97) rectangle (4.0,0.55);
  \draw[draw=black!25,densely dashed,line width=.48pt] plot coordinates {(-4.000,-1.920) (-2.500,-1.470) (-1.625,-1.278) (-0.875,-1.173) (0.000,-1.120) (0.875,-1.173) (1.625,-1.278) (2.500,-1.470) (4.000,-1.920)};
  \draw[draw=black!25,densely dashed,line width=.48pt] plot coordinates {(-4.000,-1.800) (-2.500,-1.470) (-2.062,-1.339) (-1.625,-1.278) (-1.250,-1.195) (-0.875,-1.173) (-0.350,-1.099) (0.000,-1.120) (0.350,-1.099) (0.875,-1.173) (1.250,-1.195) (1.625,-1.278) (2.062,-1.339) (2.500,-1.470) (4.000,-1.800)};
  \draw[draw=black!25,densely dashed,line width=.48pt] plot coordinates {(-4.000,-1.610) (-2.062,-1.339) (-1.667,-1.220) (-1.250,-1.195) (-0.714,-1.077) (-0.350,-1.099) (0.000,-1.050) (0.350,-1.099) (0.714,-1.077) (1.250,-1.195) (1.667,-1.220) (2.062,-1.339) (4.000,-1.610)};
  \draw[draw=black!25,densely dashed,line width=.48pt] plot coordinates {(-4.000,-1.360) (-1.667,-1.220) (-1.111,-1.053) (-0.714,-1.077) (-0.361,-0.999) (0.000,-1.050) (0.361,-0.999) (0.714,-1.077) (1.111,-1.053) (1.667,-1.220) (4.000,-1.360)};
  \draw[draw=black!25,densely dashed,line width=.48pt] plot coordinates {(-4.000,-0.880) (-1.111,-1.053) (-0.750,-0.945) (-0.361,-0.999) (0.000,-0.920) (0.361,-0.999) (0.750,-0.945) (1.111,-1.053) (4.000,-0.880)};
  \draw[draw=black!25,densely dashed,line width=.48pt] plot coordinates {(-4.000,-0.490) (-0.750,-0.945) (-0.385,-0.835) (0.000,-0.920) (0.385,-0.835) (0.750,-0.945) (4.000,-0.490)};
  \draw[draw=black!25,densely dashed,line width=.48pt] plot coordinates {(-4.000,-0.040) (-0.385,-0.835) (0.000,-0.720) (0.385,-0.835) (4.000,-0.040)};
  \draw[draw=black!25,densely dashed,line width=.48pt] plot coordinates {(-4.000,0.480) (0.000,-0.720) (4.000,0.480)};

  \draw[draw=black,line width=.95pt] plot coordinates {(-4.000,-1.920) (-2.500,-1.470) (-1.625,-1.278) (-0.875,-1.173) (0.000,-1.120) (0.875,-1.173) (1.625,-1.278) (2.500,-1.470) (4.000,-1.920)};
  \draw[draw=black,dotted,line width=.95pt] plot coordinates {(-4.000,-1.800) (-2.500,-1.470) (-2.062,-1.339) (-1.625,-1.278) (-1.250,-1.195) (-0.875,-1.173) (-0.350,-1.099) (0.000,-1.120) (0.350,-1.099) (0.875,-1.173) (1.250,-1.195) (1.625,-1.278) (2.062,-1.339) (2.500,-1.470) (4.000,-1.800)};
  \draw[draw=black,line width=.95pt] plot coordinates {(-4.000,-1.610) (-3.188,-1.496) (-2.500,-1.290) (-1.625,-1.098) (-0.875,-0.993) (0.000,-0.940) (0.875,-0.993) (1.625,-1.098) (2.500,-1.290) (3.188,-1.496) (4.000,-1.610)};
  \draw[draw=black,dotted,line width=.95pt] plot coordinates {(-4.000,-1.360) (-2.417,-1.265) (-2.062,-1.159) (-1.625,-1.098) (-1.250,-1.015) (-0.875,-0.993) (-0.350,-0.919) (0.000,-0.940) (0.350,-0.919) (0.875,-0.993) (1.250,-1.015) (1.625,-1.098) (2.062,-1.159) (2.417,-1.265) (4.000,-1.360)};
  \draw[draw=black,line width=.95pt] plot coordinates {(-4.000,-0.880) (-2.000,-1.000) (-1.625,-0.918) (-0.875,-0.813) (0.000,-0.760) (0.875,-0.813) (1.625,-0.918) (2.000,-1.000) (4.000,-0.880)};
  \draw[draw=black,dotted,line width=.95pt] plot coordinates {(-4.000,-0.490) (-1.361,-0.859) (-1.250,-0.835) (-0.875,-0.813) (-0.350,-0.739) (0.000,-0.760) (0.350,-0.739) (0.875,-0.813) (1.250,-0.835) (1.361,-0.859) (4.000,-0.490)};
  \draw[draw=black,line width=.95pt] plot coordinates {(-4.000,-0.040) (-1.139,-0.669) (-0.875,-0.633) (0.000,-0.580) (0.875,-0.633) (1.139,-0.669) (4.000,-0.040)};
  \draw[draw=black,dotted,line width=.95pt] plot coordinates {(-4.000,0.480) (-0.477,-0.577) (-0.350,-0.559) (0.000,-0.580) (0.350,-0.559) (0.477,-0.577) (4.000,0.480)};
\end{scope}
\begin{scope}[xshift=7.00cm]
  \draw[panel] (-4.0,-1.97) rectangle (4.0,0.55);
  \node[anchor=west,fill=white,inner sep=.45pt,text=black,font=\tiny] at (-3.96,-1.884) {$g_1$};
  \node[anchor=west,fill=white,inner sep=.45pt,text=black,font=\tiny] at (-3.96,-1.774) {$g_2$};
  \node[anchor=west,fill=white,inner sep=.45pt,text=black,font=\tiny] at (-3.96,-1.593) {$g_3$};
  \node[anchor=west,fill=white,inner sep=.45pt,text=black,font=\tiny] at (-3.96,-1.353) {$g_4$};
  \node[anchor=west,fill=white,inner sep=.45pt,text=black,font=\tiny] at (-3.96,-0.887) {$g_5$};
  \node[anchor=west,fill=white,inner sep=.45pt,text=black,font=\tiny] at (-3.96,-0.507) {$g_6$};
  \node[anchor=west,fill=white,inner sep=.45pt,text=black,font=\tiny] at (-3.96,-0.066) {$g_7$};
  \node[anchor=west,fill=white,inner sep=.45pt,text=black,font=\tiny] at (-3.96,0.444) {$g_8$};
\end{scope}

\end{tikzpicture}
\caption{An illustration of the ``flatten the packet'' step. Starting from the midlines of the planks (left), we construct the functions $g_1,g_2,\ldots$ (right). The odd-indexed functions are drawn with solid lines, while the even-indexed functions are drawn with dotted lines. The functions satisfy the property $g_{i+2}\geq g_i+s$.}
    \label{fig:lipschitz}
\end{figure}

Define the ribbons
\[
 L_i=\{(x,y):|y-g_i(x)|<h\},\qquad h=0.54w.
\]
Because every vertical plank section has half-length at most $0.52w$, \eqref{eq:g-approx} implies that the ribbons cover $\bigcup\cR_0$.  On the other hand, \eqref{eq:g-separation} gives ribbon multiplicity at most
\[
 2\left(\frac{2h}{s}+1\right)\le220.
\]

\medskip
\noindent\emph{Step 2: a ribbon endpoint lies in dense overlap.}
We claim that if
\[
 z=(x_0,y_0)\in L_i\cap4\rho D\cap\ell_S,
 \qquad S\in\{P,Q\},
\]
then
\begin{equation}\label{eq:ribbon-dense}
 B(z,w/2)\subset S\cap U_{\delta_1}.
\end{equation}
The inclusion in $S$ is immediate because $z$ lies on the center line of the width-$w$ plank $S$.  It remains to find a $\delta_1$-dense disk containing $B(z,w/2)$.

Unwind the recursion defining $g_i$ at $x_0$.  For some integer $q\ge0$,
\[
 g_i(x_0)=f_{i-2q}(x_0)+qs.
\]
Assume first that $q\ge1$.  The $2q+1$ center heights
$f_{i-2q}(x_0),\dots,f_i(x_0)$ lie in an interval of length $qs$.  
There are two sub-cases.

The first sub-case is \[qs\le\rho.\]
As $x$ varies over $|x-x_0|\le qs$, slopes bounded by $1/4$ increase the total spread by at most $\frac12qs$.  Hence, the corresponding center heights lie in an interval of length at most $1.5qs$, and their vertical plank sections lie in an interval of length at most $1.5qs+1.04w$.  On each such vertical line, the one-dimensional excess is at least
\[
 (2q+1)w-(1.5qs+1.04w)\ge qw.
\]
After integrating over $|x-x_0|\le qs$, the excess is at least
\begin{equation}\label{eq:q-excess}
 2qs\cdot qw=0.02q^2w^2.
\end{equation}
In this case, all of this excess lies in
\[
 B_0:=B\bigl((x_0,g_i(x_0)),3(qs+w)\bigr).
\]
Indeed, throughout the integration strip the horizontal displacement is at most $qs$, while the relevant center heights remain within $1.25qs$ of $g_i(x_0)$.  

We now claim that $B_0 \subset D$. 
Because $z\in4\rho D$ and $c\in\rho D$, its $c$-centered coordinates satisfy $\sqrt{x_0^2+y_0^2}=\|z-c\|<5\rho$.  If $qs\le\rho$, then \eqref{eq:center-height-bound} gives
\[
 |g_i(x_0)|=|f_{i-2q}(x_0)+qs|\le6.01\rho.
\]
Thus the center of $B_0$ is at distance less than
$\sqrt{(5\rho)^2+(6.01\rho)^2}<7.82\rho$
from $c$, while its radius is at most $3\rho+3w<3.001\rho$.  Since $\|c\|<\rho$ and $\rho=1/12$, 
we can conclude that $B_0 \subset D$. 

Because
\[
 |B_0|=9\pi w^2(0.01q+1)^2,
\]
we can deduce that
$B_0$ is $\delta_1$-dense.
Since \(z=(x_0,y_0)\in L_i\),
\[
|z-(x_0,g_i(x_0))|<h=0.54w.
\]
For every \(z'\in B(z,w/2)\),
\[
\|z'-(x_0,g_i(x_0))\|
\le \|z'-z\|+\|z-(x_0,g_i(x_0))\|
<1.04w.
\]
It follows that
\[
B(z,w/2)\subset B_0.
\]

The second sub-case is
\[qs>\rho.\] Then, $(2q+1)w>200\rho$.  For every $x$ with $|x-x_0|\le\rho$, we have $|x|<6\rho$ in the $c$-centered coordinates, so \eqref{eq:center-height-bound-six} places every center height in $[-5.26\rho,5.26\rho]$.  Including the plank half-sections, all vertical sections lie in an interval of length less than $10.53\rho$.  There are $2q+1$ planks, so the one-dimensional vertical excess is more than
$(2q+1)w-10.53\rho>189\rho$.
Integrating over $|x-x_0|\le\rho$ gives excess greater than $378\rho^2$.  The contributing region lies in the $c$-centered rectangle
\[
 [-6\rho,6\rho]\times[-5.27\rho,5.27\rho]
 \subset B(c,8\rho)\subset D.
\]
It follows that $B(c,8\rho)$ is $\delta_1$-dense. Finally,  $B(z,w/2)\subset B(c,8\rho)$.

It remains to consider $q=0$.  Then $g_i(x_0)=f_i(x_0)$.  Let $R\in\cR_0$ be a plank whose center line has height $f_i(x_0)$.  Because $z\in L_i$, the distance from $z$ to $\ell_R$ is less than $h=0.54w<5w/8$.  Let $z_R$ be the foot of the perpendicular from $z$ to $\ell_R$ and let $m$ be the midpoint of $[z,z_R]$.  The disks $B(z,w/2)\subset S$ and $B(z_R,w/2)\subset R$ both contain
$B(m,3w/16)$.\
Thus, $M_{\cP}\ge2$ on a disk of radius $3w/16$ contained in $B(z,2w)$:
\[
 \int_{B(z,2w)}(M_{\cP}(x)-1)_+
 >\delta_1 |B(z,2w)|.
\]
Since $z\in4\rho D$ and $2w<\rho$, the disk $B(z,2w)$ lies in $D$.  This completes the proof of \eqref{eq:ribbon-dense}.

\medskip
\noindent\emph{Step 3: force each relevant ribbon to reach an arm.}
Extend the two arms of $T$ to
\[
 \widehat E_0=\ell_P\cap4\rho D,
 \qquad \widehat E_1=\ell_Q\cap4\rho D.
\]
We choose a slightly larger triangle
\[
 \widetilde T=\conv\{c,c+L'_0e_0,c+L'_1e_1\},
 \qquad L'_0,L'_1\in[2\rho,2.9\rho],
\]
so that $T\subset\widetilde T\subset B(c,2.9\rho)\subset4\rho D$, its perimeter is at most $11.6\rho$, and its third side makes angle at least $1.5\theta$ with the horizontal.
This choice is possible by the continuous choice of $L'_0,L'_1$
using the assumption $0.1\sin\beta\ge3\theta$.

For each ribbon, define
\[
 X_i=\{x:(x,g_i(x))\in\operatorname{int}\widetilde T\}.
\]
Suppose $X_i\ne\varnothing$ and take a maximal component $(x_1,x_2)$ of $X_i$.  Its two center-curve endpoints lie on $\partial\widetilde T$.  The center graph has slopes in $[-\tan\theta,\tan\theta]$.  The third side is either vertical or has slope of absolute value $>\tan\theta$. In both cases, the center curve meets the third side at most once.  At least one endpoint of the component therefore lies on one of the two arms.
Let $p$ be such an endpoint and let $\ell\in\{\ell_P,\ell_Q\}$ be the supporting arm.  
If the arm angle from the horizontal is $\psi\le\psi_0$ and $\tan\psi\le1$, then over a horizontal displacement $t$, the vertical separation between the arm and the center graph grows at rate at most $\tan\psi+\tan\theta\le\mu_0+\tan\theta$.  Hence $L_i$ contains an arm segment through $p$ of length at least $\frac{2h}{\mu_0+\tan\theta}$.
If $\tan\psi>1$, use the vertical coordinate instead; since $|\cot\psi|<1$ and $\mu_0=1$, the same lower bound follows. Therefore,
\begin{equation}\label{eq:trace-lower}
 \HH\bigl(L_i\cap(\widehat E_0\cup\widehat E_1)\bigr)
 \ge\frac{2h}{\mu_0+\tan\theta}.
\end{equation}
This segment can be guaranteed to lie in $4\rho D$.

\medskip
\noindent\emph{Step 4: compare packet area with endpoint trace.}
Points of $L_i\cap\widetilde T$ whose abscissa is not in $X_i$ lie in the $h$-neighborhood of $\partial\widetilde T$.  Steiner's bound yields
\[
 |N_h(\partial\widetilde T)|
 \le2h\,\per(\widetilde T)+\pi h^2
 \le12.7\rho w.
\]
Because $|T|=2\rho^2\sin\beta$,
\[
 220|N_h(\partial\widetilde T)|
 \le2800\rho w
 \le\frac12\eta_0|T|.
\]
Because the ribbons cover the planks, the ribbon multiplicity bound and vertical height $h$ imply
\[
 \eta_0|T|
 \le \Big|\widetilde T\cap\bigcup \cR_0 \Big|
 \le \sum_i |L_i\cap\widetilde T|
 \le 220|N_h(\partial\widetilde T)| + \sum_{i:X_i\ne\varnothing}2h|X_i|.      
\]
The preceding boundary estimate therefore implies
\[
 \sum_{i:X_i\ne\varnothing}2h|X_i|
 \ge\frac12\eta_0|T|
 =\eta_0\rho^2\sin\beta.
\]
Because $|X_i|\le5.8\rho$,
\[
 \sum_i\HH\bigl(L_i\cap(\widehat E_0\cup\widehat E_1)\bigr)
 \ge\frac{\eta_0\rho\sin\beta}
 {5.8(\mu_0+\tan\theta)}.
\]
Ribbon multiplicity is at most $220$, so the union of all ribbon traces on the two arms has length at least
\[
 \frac{\eta_0\rho\sin\beta}
 {1276(\mu_0+\tan\theta)}.
\]
One of the two arms carries at least half of this length. Call the corresponding plank $S$.  By \eqref{eq:ribbon-dense}, every trace point $z$ on that arm has $B(z,w/2)\subset S\cap U_{\delta_1}$.  Since the centers are collinear, the union of these disks has area at least $(w/2)$ times the trace length.  Therefore,
\[
 |U_{\delta_1}\cap S\cap5\rho D|
 \ge\frac{\eta_0\sin\beta}
 {5200(\mu_0+\tan\theta)}\rho w.
\]
Finally, the disks lie in $5\rho D$ because their centers lie in $4\rho D$ and $w/2<\rho$.  
\end{proof}

\subsection{Proof of Proposition \ref{prop:pair-payment}}

Let $\eta=\tau/40=2.5\cdot10^{-6}$ and $\beta_0=0.06$.  Lemma~\ref{lem:triangle-reduction} gives $\cR$, a distinguished arm, and $T$.

Suppose first that $\beta\le\beta_0$.  The directions in $\cR$ lie in an interval of length $4\beta$.  Divide it into $121$ equal intervals and retain a subfamily $\cR_0$ covering at least $\eta|T|/121$.  Since $4\beta/121\le(1/30)\sin\beta$, every selected direction is within $\theta=(1/60)\sin\beta$ of the midpoint $d$ of its interval.  Take
\[
 \eta_0=\frac\eta{121},\qquad
 \theta=\frac1{60}\sin\beta,\qquad
 \psi_0=3\beta.
\]
Then \eqref{eq:theta-cond} holds and
\[
 \mu_0+\tan\theta \le\tan(3\beta)+\tan(\beta/60)\le3.1\beta.
\]
The width requirement in \eqref{eq:packet-cond} follows from \eqref{eq:thin-A}.
Lemma \ref{lem:main-packet} therefore gives
\[
 |U_\delta\cap(P\cup Q)\cap5\rho D|
 \ge \frac{\eta}{121\cdot5200\cdot3.1}\frac{\sin\beta}{\beta}\rho w
 >\gamma \rho w.
\]

Now suppose $\beta\ge\beta_0$.  Partition all unoriented directions $[0,\pi)$ into $1600$ equal intervals and select a subfamily covering at least $\eta|T|/1600$.  Set
\[
 \eta_0=\frac\eta{1600},\qquad
 \theta=\frac\pi{3200},\qquad
 \psi_0=\frac\pi2.
\]
Then \eqref{eq:theta-cond} holds, and
$\mu_0+\tan\theta<1.01$. 
The width condition again follows from \eqref{eq:thin-A}.  Hence
\[
 |U_\delta\cap(P\cup Q)\cap5\rho D|
 \ge\frac{\eta\sin0.06}{1600\cdot5200\cdot1.01}\rho w
 > \gamma \rho w.
\]
\qed 

\section{The cost of a hole}

We now combine the pruning procedure and the flattening mechanism to prove the main theorem.
By subdividing the planks if needed, we can assume that the have common width which is as small as we wish.

\begin{proof}[Proof of Theorem~\ref{thm:hole-cost}]
Run the pruning procedure.  Let
$(P_1,Q_1),\dots,(P_m,Q_m)$ be the removed pairs and let $H'$ be the final hole.  
By Theorem \ref{thm:pruning}, with $\lambda=\rho/(200r)\ge2$,
\[
 (\lambda-1)|H|\le|H'|-|H|
 \le\sum_{i=1}^m\bigl(|P_i\cap D|+|Q_i\cap D|\bigr)
 \le4wm.
\]
Thus
\begin{equation}\label{eq:number-deletions}
 m\ge\frac{\lambda|H|}{8w}=\frac{\rho|H|}{1600rw}.
\end{equation}
For every $i$, let
\[
 S_i=U_\delta\cap(P_i\cup Q_i)\cap5\rho D.
\]
At its deletion time the pair is removable.  Proposition \ref{prop:pair-payment} gives
\begin{equation}\label{eq:Si-payment}
 |S_i|\ge \gamma\rho w.
\end{equation}
On the set $S=\bigcup_iS_i$,
\[
 \sum_{i=1}^m\1_{S_i}\le\sum_{i=1}^m(\1_{P_i}+\1_{Q_i})\le M_{\cP}.
\]
Hence,
\begin{equation}\label{eq:S-sum}
 \sum_i|S_i|\le |U_\delta|+I(\cP).
\end{equation}
By the Vitali covering lemma, a disjoint subfamily of $\delta$-dense disks $B_j$ has fivefold enlargements covering $U_\delta$.  Therefore,
\[
 |U_\delta|\le25\sum_j|B_j|
 <\frac{25}{\delta}\sum_j\int_{B_j}(M_{\cP}(x)-1)_+ \; dx
 \le\frac{25}{\delta}I(\cP).
\]
We can conclude
\[
 \frac{\gamma\rho^2}{1600r}|H|
 \le m\gamma\rho w
 \le\sum_i|S_i|
 \le\frac{26}{\delta}I(\cP).
\]
Thus,
\[
 I(\cP)\ge\frac{\delta \gamma\rho^2}{41600r}|H|.\]
\end{proof}

\section{From hole cost to total width}

\label{sec:CostToB}

We finish with the relative-width measure on the disk.  This is the standard bridge from an overlap estimate to a total-width estimate; see for example \cite{gardner1988relative}.

\begin{lemma}\label{lem:width-measure}
Define a probability measure on $D$ by
\begin{equation}\label{eq:mu}
 d\mu(x)=\frac{dx}{2\pi\sqrt{1-\norm x^2}}.
\end{equation}
If a plank is reduced so that its normal-coordinate interval lies in $[-1,1]$, then
\[
 \mu(P)=\frac{w(P)}2.
\]
\end{lemma}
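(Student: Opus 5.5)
The plan is to compute $\mu(P)$ directly, by integrating the density \eqref{eq:mu} in coordinates adapted to the plank. Rotate so that the plank's unit normal is $e_1$. The measure $\mu$ is rotation invariant, so this loses nothing, and we may write
\[
 P\cap D=\{x\in D:\ t_0\le x_1\le t_0+w(P)\},\qquad -1\le t_0<t_0+w(P)\le 1.
\]
This is where the hypothesis that the normal-coordinate interval lies in $[-1,1]$ is used. By Fubini it suffices to show that the pushforward of $\mu$ under $x\mapsto x_1$ is the uniform measure $\tfrac12\,dt$ on $[-1,1]$. Once that is known,
\[
 \mu(P)=\int_{t_0}^{t_0+w(P)}\tfrac12\,dt=\frac{w(P)}2 .
\]

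The key step is the one-dimensional marginal. Fix $t\in(-1,1)$ and set $a=\sqrt{1-t^2}$. We need
\[
 \int_{-a}^{a}\frac{dy}{2\pi\sqrt{1-t^2-y^2}}=\frac{1}{2\pi}\int_{-a}^{a}\frac{dy}{\sqrt{a^2-y^2}}=\frac{1}{2\pi}\cdot\pi=\frac12 ,
\]
where the middle step uses the arcsine integral $\int_{-a}^a (a^2-y^2)^{-1/2}\,dy=\pi$. The value is independent of $t$, which is exactly the uniform-marginal claim. Integrating over $t\in[-1,1]$ gives total mass $1$, so $\mu$ is indeed a probability measure.

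An equivalent, more geometric route is Archimedes' hat-box theorem. The measure $\mu$ is the orthogonal projection to the plane of half the normalized surface measure on the upper and lower hemispheres of $S^2$. The projection of a spherical zone of height $w$ has $\mu$-mass $2\pi w/(4\pi)\cdot 2\cdot\tfrac12=w/2$. Either argument works; the direct computation above is the one I would write out.

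I expect no real obstacle here. The only points needing care are the following.
\begin{enumerate}
\item The density has an integrable singularity at $\partial D$, handled by monotone convergence or by noting that the arcsine integral converges.
\item The statement ``reduced'' means we first replace $P$ by $P\cap\{|x_1|\le1\}$. This does not change $P\cap D$, but it may shrink $w(P)$. That is exactly why the identity is stated for reduced planks; for unreduced planks it becomes the inequality $\mu(P)\le w(P)/2$, which is what the reduction from Theorem~\ref{thm:hole-cost} to Theorem~\ref{thm:annulus} will actually use.
\end{enumerate}
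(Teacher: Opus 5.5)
Your proof is correct. The paper gives no proof of this lemma and simply quotes it as the classical relative-width measure, citing Gardner. Your uniform-marginal computation $\int_{-a}^{a}(a^2-y^2)^{-1/2}\,dy=\pi$ is exactly the standard fact behind it. The hat-box route is equivalent: $\mu$ is the orthogonal projection of normalized surface measure on $S^2$, so your extra factor $2\cdot\tfrac12$ is superfluous but harmless. Your closing observation that unreduced planks satisfy $\mu(P)\le w(P)/2$ is precisely the form used in the proof of Theorem~\ref{thm:annulus}.
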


\begin{proof}[Proof of Theorem \ref{thm:annulus}]
If the hole $H$ is empty, Lemma \ref{lem:width-measure} immediately gives $W(\cP):=\sum_{P \in \cP} w(P)\ge2$.  Otherwise, Theorem \ref{thm:hole-cost} and Lemma \ref{lem:width-measure} give
\[
 W(\cP)\ge2+\frac{|H|}{\pi}
 \left(\frac{c_0}{r}-\frac1{\sqrt{1-r^2}}\right).
\]
For small $r$, we get $W(\cP)>2$.
\end{proof}

\bibliographystyle{amsplain}
\bibliography{planks}

\end{document}